\newtheorem{theorem}{Theorem}[section]
\newtheorem{lemma}[theorem]{Lemma}
\newtheorem{definition}[theorem]{Definition}
\newtheorem{proposition}[theorem]{Proposition}
\theoremstyle{remark}
\newtheorem{remark}[theorem]{Remark}
\renewenvironment{proof}[1][Proof]{\noindent{\itshape {#1.} } }{$\Box$
\medskip}
\numberwithin{equation}{section}
\newcommand{\R}{\mathbb{R}}
\newcommand{\E}{\mathbb{E}}
\newcommand{\eps}{\varepsilon}
\newcommand{\X}{X}
\newcommand{\D}{\mathcal{D}}
\newcommand{\diam}{\mathrm{rad}}
\newcommand{\absp}{q}
\newcommand{\Em}{E^{\mathrm{m}}}
\newcommand{\Es}{E^{\mathrm{s}}}
\newcommand{\alphas}{\alpha^{\mathrm{s}}}
\newcommand{\As}{A^{\mathrm{s}}}
\newcommand{\h}{\hspace*{.24in}}
\newcommand{\io}{\int_\X}
\title{Quantitative thermo-acoustic imaging: An exact reconstruction formula\thanks{\footnotesize This work
was supported  by ERC Advanced Grant Project MULTIMOD--267184.}}
\author{Habib Ammari\thanks{D\'epartement de Math\'ematiques et Applications, Ecole Normale
Sup\'erieure, 45 Rue d'Ulm, 75005 Paris, France
(habib.ammari@ens.fr, wjing@dma.ens.fr, lnguyen@dma.ens.fr).} \and
Josselin Garnier\thanks{Laboratoire de Probabilit\'es et Mod\`eles
Al\'eatoires \& Laboratoire Jacques-Louis Lions, Universit\'e
Paris VII, 2 Place Jussieu, 75251 Paris Cedex 5, France
(garnier@math.jussieu.fr).} \and Wenjia Jing\footnotemark[2] \and
Loc Hoang Nguyen  \footnotemark[2]}
\begin{document}
\maketitle
\begin{abstract}
This paper aims to  mathematically advance the field of
quantitative thermo-acoustic imaging. Given several
electromagnetic  data sets, we  establish for the first time an
analytical
 formula for reconstructing the absorption coefficient from thermal energy measurements.
 Since the formula involves derivatives of
 the given data up to the third order, it is unstable in the sense that small measurement noises
 may cause large errors. However, in the presence of measurement noise,
 the obtained formula, together with a noise regularization technique, provides
 a good
 initial guess for the true absorption coefficient. We finally correct
 the errors by deriving a reconstruction formula based on the least
  square solution of an optimal control problem and prove that this optimization
  step reduces the errors occurring and enhances the resolution.
\end{abstract}

\bigskip

\noindent {\footnotesize Mathematics Subject Classification
(MSC2000): 35R30, 35R60}

\noindent {\footnotesize Keywords: exact reconstruction formula,
hybrid imaging, optimal control, resolution and stability
analysis, measurement noise}

\section{Introduction}
\label{sec:intro}

Hybrid imaging modalities are based on a multi-wave concept.
Different physical types of waves are combined into one
tomographic process to alleviate deficiencies of each separate
type of waves, while combining their strengths. Multi-wave systems
are capable of high-resolution and high-contrast imaging
\cite{AMMARI-08, fink}. Quantitative thermo-acoustic tomography is
an emerging hybrid modality \cite{elson, BalRenUhmannZhou}. It
allows to determine the absorption distribution of a tissue from
boundary measurements of the pressure induced by electromagnetic
heating. Other examples of hybrid modalities are acousto-electric
tomography \cite{AmmariLaurentetal:preprint2012,
AMMARI-BONNETIER-CAPDEBOSCQ-08, Ammarietal:SIAM2011, wenjia,
Capd09, Kuch10, Wang04, otmar2}, magnetic resonance electrical
impedance tomography \cite{SEO02, SEO-WOO-11,
NACHMANN-TAMASAN-TIMONIV-10}, magnetic resonance elastography
\cite{AMMARI-GARAPON-08, muth, seo2}, impedance-acoustic
tomography \cite{GEBAUER-SCHERZER-08}, photo-acoustic \cite{wang,
kuchment_ejam, AMMARI-BOSSY-10}, quantitative photo-acoustic
tomography \cite{AMMARI-BOSSY-11, BalUhmann:ip2010, zhao},
magneto-acoustic imaging \cite{AMMARI-CAPDEBOSCQ-09}, and
vibro-acoustography \cite{FATEMI}.

The aims of this paper are to derive an exact formula for the
absorption coefficient from noiseless thermo-acoustic measurements
and to correct the errors of in the presence of measurement noise.
The former task is motivated by the knowledge of the ratio between
two modified data. For the latter purpose, we show how to
regularize the exact formula and propose an optimal control
algorithm to achieve a resolved image starting from the
regularized one. As far as we know, our exact formula in this
paper together with the one successfully derived in
\cite{Ammarietal:SIAM2011} are among a few exact formulas in
hybrid imaging. Moreover, the fine analysis of the effect of
measurement noise on the image quality and the proof that an
optimal control approach starting from the regularized images
yields a resolved one have never been done elsewhere.

To describe our approach, we employ several notations. Let $\X$ be
a smooth bounded domain in $\mathbb{R}^d,$ $d = 2$ or $3$. Let
$\partial \X$ denote the boundary of $\X$ and let $\nu$ be the
outward normal at $\partial \X$. For $m$ a non-negative integer,
we define the space $H^{m}(\X)$ as the family of all $m$ times
weakly differentiable functions in $L^2(\X)$, whose weak
derivatives of orders up to $m$ are functions in $L^2(\X)$. We let
$H_0^{m}(\X)$ be the closure of $\mathcal{C}^\infty_c(\X)$ in
$H^{m}(\X)$, where $\mathcal{C}^\infty_c(\X)$ is the set of all
infinitely differentiable functions with compact supports in $\X$.
Finally, we introduce the space $H^{1/2}(\partial X)$  of traces
on $\partial X$ of all functions in $H^1(\X)$.

Let $q$ be a positive real-valued function on $X$. Consider the
Helmholtz problem:
\begin{equation}
 \label{eq:helmholtz}
\begin{aligned}
 &(\Delta + k^2 + ik \absp) u = 0, & & x \in \X, \\
 &\nu \cdot \nabla u - ik u = g, & & x \in \partial \X,
\end{aligned}
\end{equation}
which is the scalar approximation of Maxwell's equations. Here,
$k>0$ is the wave number, $g$ is a boundary datum, and $u$ is the
electrical field. The Robin boundary condition approximates
Sommerfeld's radiation condition at high frequencies \cite{EM,KG}.
For simplicity, instead of considering the Helmholtz equation on
the whole Euclidean space with Sommerfeld's radiation condition we
focus on the Helmholtz problem with Robin boundary condition on
the bounded open set $\X$.  Problem (\ref{eq:helmholtz}) is
well-posed in $H^1(\X)$ for all $g \in L^{2}(\partial \X)$. In
fact, writing a variational formulation of (\ref{eq:helmholtz})
shows the uniqueness of a solution to (\ref{eq:helmholtz}), while
the existence of a solution follows from Fredholm's alternative.


The thermo-acoustic imaging problem can be formulated as the
inverse problem of reconstructing the absorption coefficient $q$
from thermo-acoustic measurements $q |u|^2$ in $X$. The quantity
$q |u|^2$ in $X$ is the heat energy due to the absorption
distribution $q$. It generates  an acoustic wave propagating
inside the medium. Finding the initial data in the acoustic wave
from boundary measurements yields the heat energy distribution.
Our aim in this paper is to separate $q$ from $u$. We provide an
explicit formula for reconstructing $q$ from the heat energy $q
|u|^2$ in $X$. As far as we know, our formula is new. Indeed, it
is promising since it can be used as an initial guess to achieve a
resolved image of the absorption distribution in a robust way.

Our first task is to enrich the set of data. Suppose that we have
measurements $q(x) |u_j|^2$ corresponding to linear combinations
of boundary data $g_j$, for $j=1, \ldots, d+1$. We show that one
can construct the set of quantities:
\begin{equation}
\label{eq:dataset}
    \mathcal{E} = \{E_{j}(x) = q(x) u_j(x) \overline{u_1(x)},\,  x \in X  ~|~  j = 1, \ldots, d + 1\},
\end{equation}
where $u_j$ denotes the solution of
    \begin{equation}
        \begin{aligned}
             &(\Delta + k^2 + ik \absp) u_j = 0, & & x \in \X, \\
             &\nu \cdot \nabla u_j - ik u_j = g_j, & & x \in \partial
             \X,
        \end{aligned} \h
    \label{1.2}
    \end{equation}
provided that $(g_j)_{j = 1}^{d + 1}$ is a proper set of
measurements (see Definition \ref{def: proper set of
measurement}). The construction of $E_1$ was completely described
in \cite{BalRenUhmannZhou} and that of $E_j$, $j =2, \ldots, d +
1,$ will be done using Proposition \ref{proposition 2.5}. Noting
that
\[\frac{u_j}{u_1} = \frac{E_j}{E_1}, \h j = 2, \ldots, d + 1,\] we
are able to establish an exact formula for $q$ provided that
$\mathcal{E} = (E_j)_{ j=1}^{d + 1}$ is "good" enough as in
Theorem \ref{Theorem 2.3}. This procedure will be described in
Section \ref{An Initial guess}.

As said, the collected data $\mathcal{E}$ are often corrupted by
measurement noise that varies on very small length scale. This
renders the aforementioned exact formula, which requires
differentiating the data up to third order, completely
unpractical. To solve this issue, we smooth the noise by averaging
the data over a small window and apply the smoothed data to the
exact formula. The resulting function is then shown to be close to
the real one, provided that the width of the averaging window is
properly chosen. We thus view this function as an initial guess
and then perform a further step of least square optimization. The
resulting reconstruction improves the initial guess in the $L^2$
sense.

The rest of the paper is organized as follows. In Section
\ref{sect2} we introduce the notion of a proper set of
measurements and its role to get data $\mathcal{E}$ and some
useful estimates as well. The aim of Section \ref{An Initial
guess} is to provide an explicit formula for reconstructing $q$
when a proper set of measurements is given. In Section
\ref{sec:forw1} we study the Fr\'echet differentiability of the
data with respect to variations of $q$ and prove that the
differential operator is invertible for small enough variations.
In Section \ref{sec:noise} we consider a noise model for the data
and show how to regularize the exact inversion formula in order to
obtain a good initial guess. We also perform a refinement of the
initial guess using an optimal control approach and show that this
procedure yields a resolution enhancement.

\section{Preliminaries} \label{sect2}

Motivated by \cite{Ammarietal:SIAM2011}, we introduce the following concept.
\begin{definition}
    The set $(g_j)_{j=1}^{d + 1} \subset L^2(\partial \X)$ is a \textbf{proper set
    of measurements} of (\ref{eq:helmholtz}) if and only if:
\begin{itemize}
    \item[(i)] $|u_1| > 0$ in $\X$.
    \item[(ii)] The matrix $[u_j, \nabla^T u_j]_{1 \leq j \leq d + 1}$ is invertible for all $x \in
    \X.$
\end{itemize}
    Here, $T$ denotes the transpose and $u_j$ is the solution of (\ref{1.2}).
\label{def: proper set of measurement}
\end{definition}

The following proposition is a direct consequence of Lemma 4.1 in
\cite{BalRenUhmannZhou} and Proposition 3.1 in
\cite{BalUhmann:ip2010}. It plays an important role to prove that
it is possible to find a proper set of measurements.
\begin{proposition} Let $\delta >0$ and $m>d/2$. There exists a positive constant $C$ such that
for any $\xi \in \mathbb{C}^d, \xi \cdot \xi = 0,$ and $|\xi|
> \delta$, and for any $q \in H^m(\X)$, the solution $w$ of
    \begin{equation}
        \Delta w + \xi \cdot \nabla w = -(k^2 + ik q \chi(\X))(1 + w) \quad
        \mbox{\rm in } \mathbb{R}^d,
    \label{eq:opticsolution}
    \end{equation}
where $\chi(\X)$ denotes the characteristic function of $\X$,
satisfies
    \begin{equation}
        \|w\|_{H^m(\X)} \leq \frac{C\|q\|_{H^m(X)}}{|\xi|}.
    \label{ine:boundedness of optic solution}
    \end{equation}

\label{BalRenUhmannZhou}
\end{proposition}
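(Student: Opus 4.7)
The plan is to recognize $w$ as the remainder in the Sylvester--Uhlmann complex geometric optics (CGO) ansatz $U = e^{\xi\cdot x}(1+w)$ for the equation $(\Delta + k^2 + ikq\chi(\X))U = 0$: since $\xi\cdot\xi = 0$, this substitution kills the leading exponential contribution and yields exactly (\ref{eq:opticsolution}). The proof then reduces to invoking, in a compatible functional setting, the classical fact that the constant-coefficient operator $L_\xi := \Delta + \xi\cdot\nabla$ admits a right inverse $G_\xi$ whose norm decays like $1/|\xi|$ as $|\xi|\to\infty$. This is precisely what the cited Lemma 4.1 of \cite{BalRenUhmannZhou} and Proposition 3.1 of \cite{BalUhmann:ip2010} provide in a functional framework compatible with the $H^m$ estimate being claimed here.

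Concretely, I would rewrite (\ref{eq:opticsolution}) as the fixed-point problem
$$w \;=\; -G_\xi\bigl[(k^2 + ikq\chi(\X))(1+w)\bigr],$$
and exploit the algebra property of $H^m(\X)$, which holds because $m > d/2$ implies the Sobolev embedding $H^m(\X)\hookrightarrow L^\infty(\X)$. This gives the bilinear estimate $\|(k^2 + ikq\chi(\X))(1+w)\|_{H^m(\X)} \le C(1+\|q\|_{H^m(\X)})(1+\|w\|_{H^m(\X)})$. Combining this with the operator bound $\|G_\xi\|_{H^m\to H^m} \le C/|\xi|$ (which follows for the constant-coefficient operator $L_\xi$ by commuting derivatives through), the map is a strict contraction on a small ball of $H^m(\X)$ as soon as $|\xi|$ is large enough relative to $\|q\|_{H^m(\X)}$. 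Banach's fixed-point theorem then yields a unique solution with the claimed bound $\|w\|_{H^m(\X)} \le C\|q\|_{H^m(\X)}/|\xi|$, the right-hand side coming from $\|G_\xi[k^2 + ikq\chi(\X)]\|_{H^m(\X)}$.

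For the remaining compact range $\delta<|\xi|\le R$, where $R$ is the threshold above which the contraction argument applies, a standard compactness/Fredholm argument (continuous dependence on $\xi$ together with uniqueness) yields a uniform bound that can be absorbed into the constant $C$.

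The main obstacle, which is already handled in the cited works, is that the inhomogeneous term $(k^2 + ikq\chi(\X))(1+w)$ is not compactly supported in $\mathbb{R}^d$ because of the constant $k^2$, so a naive application of the Faddeev Green's function on compactly supported data does not suffice. The cited results set up the appropriate weighted Sobolev framework on $\mathbb{R}^d$ in which $G_\xi$ still acts with the decay rate $1/|\xi|$ uniformly for $|\xi|>\delta$, and once that framework is in place the present proposition follows directly from their machinery combined with the algebra property above.
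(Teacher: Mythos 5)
The paper does not actually prove this proposition: it is recorded as a direct consequence of Lemma 4.1 in \cite{BalRenUhmannZhou} and Proposition 3.1 in \cite{BalUhmann:ip2010}, so your fixed-point reconstruction (Faddeev-type inverse $G_\xi$ with $\|G_\xi\|\le C/|\xi|$, the algebra property of $H^m$ for $m>d/2$, and a contraction for large $|\xi|$) fills in precisely the standard argument that sits behind those citations, while still deferring to them for the key operator bound. In that sense your route is the same as the paper's, only more explicit.

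Two steps of your sketch are, however, genuinely loose. First, the contraction gives the estimate only once $|\xi|\gtrsim 1+\|q\|_{H^m(\X)}$, a threshold that moves with $q$; the compactness/Fredholm argument you propose for the range $\delta<|\xi|\le R$ can at best produce a bound for a fixed $q$ (or for $q$ in a bounded set), not one constant $C$ valid for every $q\in H^m(\X)$ with the linear scaling in $\|q\|_{H^m(\X)}$ demanded by \eqref{ine:boundedness of optic solution}, and it also presupposes existence and uniqueness of $w$ for moderate $|\xi|$, which is not established. Second, the constant $k^2$ in the source of \eqref{eq:opticsolution} is not a technicality that the references' weighted framework absorbs: a constant function does not belong to the weighted spaces on which $G_\xi$ gains the factor $1/|\xi|$, and the cited construction avoids the issue by choosing the complex phase $\rho$ with $\rho\cdot\rho=-k^2$, so that only the compactly supported term $ikq\chi(\X)$ remains as a source. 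Keeping $\xi\cdot\xi=0$ as in \eqref{eq:opticsolution}, your own estimate would read $C(k^2+k\|q\|_{H^m(\X)})/|\xi|$ rather than $C\|q\|_{H^m(\X)}/|\xi|$ (note the stated bound cannot hold as written when $q\equiv 0$); this defect is inherited from the statement as transcribed in the paper, but a complete proof should either absorb $k^2$ into the phase as in \cite{BalRenUhmannZhou} or state the weaker bound that the argument actually yields.
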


\begin{proposition}
    If $q \in H^{m}(\X)$, $m > 1 + \frac{d}{2}$, then (\ref{eq:helmholtz}) has a proper set of measurements.
\label{proposition 2.3}
\end{proposition}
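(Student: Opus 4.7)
The plan is to construct the boundary data from complex geometric optics (CGO) solutions supplied by Proposition \ref{BalRenUhmannZhou}. For $\xi \in \mathbb{C}^d$ with $\xi \cdot \xi = 0$ and $|\xi| > \delta$, let $w_\xi$ be the solution of (\ref{eq:opticsolution}) given by Proposition \ref{BalRenUhmannZhou}, and set
\[
u_\xi(x) := e^{\xi \cdot x/2}\bigl(1 + w_\xi(x)\bigr), \qquad x \in \X.
\]
Using $\xi \cdot \xi = 0$, a direct computation yields $(\Delta + k^2 + ik q)u_\xi = 0$ in $\X$. Since $m > 1 + d/2$, the Sobolev embedding $H^m(\X) \hookrightarrow C^1(\overline{\X})$ makes $u_\xi \in C^1(\overline{\X})$, so that $g_\xi := (\nu \cdot \nabla - ik) u_\xi \bigr|_{\partial \X} \in L^2(\partial \X)$. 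The well-posedness of (\ref{eq:helmholtz}) recalled in the introduction then identifies $u_\xi \bigr|_{\X}$ with the unique $H^1$-solution of (\ref{1.2}) associated with $g_\xi$. I will therefore seek the proper set of measurements in the form $g_j = g_{\xi_j}$ for a suitable choice of $\xi_1,\ldots,\xi_{d+1}$.

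Combining (\ref{ine:boundedness of optic solution}) with Sobolev embedding gives $\|w_\xi\|_{C^1(\overline{\X})} \leq C\|q\|_{H^m(\X)}/|\xi|$. Taking $|\xi_1|$ large enough forces $|1 + w_{\xi_1}| \geq 1/2$ on $\overline{\X}$, so $|u_1| = |u_{\xi_1}| > 0$ in $\X$, yielding condition (i). For condition (ii), differentiating $u_{\xi_j}$ and factoring the non-vanishing scalar $e^{\xi_j \cdot x/2}(1 + w_{\xi_j})$ out of row $j$ produces
\[
\bigl(u_{\xi_j},\, \nabla^T u_{\xi_j}\bigr) = e^{\xi_j \cdot x/2}(1 + w_{\xi_j}) \Bigl[\bigl(1,\, \tfrac{1}{2}\xi_j^T\bigr) + \Bigl(0,\, \tfrac{\nabla^T w_{\xi_j}}{1 + w_{\xi_j}}\Bigr)\Bigr].
\]
Hence the matrix in Definition \ref{def: proper set of measurement}(ii) reduces, up to a nonzero diagonal factor, to $A + R(x)$, where $A$ is the constant $(d{+}1)\times(d{+}1)$ matrix with $j$-th row $(1,\, \xi_j^T/2)$ and $\|R\|_{C^0(\overline{\X})} = \bigO(1/\min_j |\xi_j|)$.

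It then suffices to pick $\xi_1,\ldots,\xi_{d+1}$ on the null cone $\{\xi \cdot \xi = 0\}$ with $\det A \neq 0$ and each $|\xi_j|$ sufficiently large. Writing $\xi = \alpha + i\beta$ with $\alpha,\beta \in \R^d$, the constraint $\xi \cdot \xi = 0$ becomes $|\alpha| = |\beta|$ and $\alpha \perp \beta$, which for $d \geq 2$ leaves enough freedom. Using the homogeneity $(t\xi)\cdot(t\xi) = 0$, I would first fix a template $\xi_1^0,\ldots,\xi_{d+1}^0$ with $\det A^0 \neq 0$ (in the borderline case $d=2$ where the null cone reduces to the two complex lines $\mathbb{C}\cdot(1,i)$ and $\mathbb{C}\cdot(1,-i)$, the explicit choice $\xi_1^0 = (1,i)$, $\xi_2^0 = (1,-i)$, $\xi_3^0 = (i,1)$ verifies this by a direct determinant computation, and for $d \geq 3$ there is strictly more room), and then rescale $\xi_j = t\,\xi_j^0$. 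Since $\det A$ scales like $t^d \det A^0$ whereas $\|R\|_{C^0}$ decays like $1/|t|$, one choice of sufficiently large $|t|$ simultaneously keeps $\det(A + R(x))$ bounded away from zero uniformly on $\overline{\X}$ and arranges condition (i).

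The main obstacle I anticipate is the algebraic step just above: producing $d+1$ points on the null cone whose images $\xi_j/2 \in \mathbb{C}^d$ are in affine general position. This is nontrivial only in the borderline dimension $d = 2$, where the null cone is only one complex-dimensional; once that explicit check is in hand, the rest combines the CGO asymptotics of Proposition \ref{BalRenUhmannZhou} with a standard uniform perturbation argument for the determinant.
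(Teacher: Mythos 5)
Your proposal is correct and follows essentially the same route as the paper: complex geometric optics solutions $e^{\xi\cdot x/2}(1+w_\xi)$ built from Proposition \ref{BalRenUhmannZhou}, Sobolev embedding ($m>1+d/2$) to make $w_\xi$ small in $C^1(\overline{\X})$, factoring the nonvanishing exponential out of each row of the matrix in Definition \ref{def: proper set of measurement}, and a large-$|\xi|$ perturbation argument on the determinant. The only difference is one of bookkeeping: the paper writes explicit null vectors for every $d$ (namely $n(e_j+ie_{j+1})$ cyclically plus a special $(d+1)$-st vector, $n\gg 1$), whereas you verify an explicit affinely independent template only for $d=2$ and appeal to "more room" otherwise; for $d=3$ you would simply record such a template (the paper's choice works), after which your rescaling argument is the same as the paper's choice of large $n$.
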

\begin{proof}
Let $\epsilon$ be a small number. By choosing $\xi$
 such that $\xi \cdot \xi = 0$ and $|\xi|$ is large enough,
we find from the Sobolev embedding theorem and
(\ref{ine:boundedness of optic solution}) that the solution $w$ of
(\ref{eq:opticsolution}) satisfies
\begin{equation}
    \|w\|_{L^{\infty}(\X)} + \|\nabla w\|_{L^{\infty}(\X)} <
    \epsilon.
\label{2.33}
\end{equation}
It is not hard to verify that the function \[u = e^{\xi\cdot x}(1 + w)\] is a solution of
    \begin{equation}
        (\Delta + k^2 + ikq \chi(\X))u = 0
    \label{2.44}
    \end{equation} and it satisfies
    \[
        |u| > |e^{\xi \cdot x}|(1 - \epsilon) > 0.
    \] Choosing $g_1 = \nu \cdot \nabla u - ik u$ on $\partial \X$ gives a solution $u_1$
    of (\ref{1.2}) satisfying part $(i)$ of Definition \ref{def: proper set of measurement}.

Define
\begin{eqnarray*}
    \xi_j &=& n(e_j + ie_{j + 1}), \h j = 1, \ldots ,d - 1,\\
    \xi_d &=& n(e_d + ie_1),
\end{eqnarray*} and
\[
    \xi_{d + 1} = n\bigg(\Big[\sum_{j = 1}^{d - 1}e_j + \sqrt{d - 1}e_d\Big] + i\Big[\sum_{j = 1}^{d - 1}e_j - \sqrt{d - 1}e_d\Big]\bigg),
\]
where $n \gg 1$ and $e_j$ is the $j$th component of the natural
basis of $\mathbb{R}^d.$ Again, it is not hard to verify that
\[
    \xi_j \cdot \xi_j = 0
\] for all $j = 1, \ldots, d + 1,$
and the vectors $(1, \xi_j)_{1 \leq j \leq d + 1}$ are linearly independent in $\mathbb{C}^d$. Hence,
\begin{equation}
    \left|\det\left[
        \begin{array}{cc}
            1 & \xi^T_j
        \end{array}
    \right]_{1 \leq j \leq d + 1} \right| \gg 1,
\label{independence 1}
\end{equation}
provided that $n \gg 1.$
Let $w_j$, $1 \leq j \leq d + 1$, be the solution of
\[
    \Delta w_j + 2\xi_j \cdot \nabla w_j = -(k^2 + ikq \chi(\X))(1 + w_j)
\] and
\[
    u_j = e^{\xi_j \cdot x}(1 + w_j)
\] be the solution of (\ref{2.44}).
We have
\[
    \det\left[ \begin{array}{cc}
        u_j & \nabla^T u_j
    \end{array}
    \right]_{1 \leq j \leq d + 1}
    = e^{\xi_j \cdot x}(1 + w_j)\det \left[ \begin{array}{cc}
        (1 + w_j) & \xi_j^T + \frac{\nabla^T w_j}{1 + w_j}
    \end{array}
    \right]_{1 \leq j \leq d + 1}.
\] Thus, (\ref{2.33}), (\ref{independence 1}), the continuity of the map that sends
a square matrix to its determinant and the choice of large $n$ imply the second part of Definition
\ref{def: proper set of measurement} with
\[
    g_j = \nu \cdot \nabla u_j - ik u_j, \h j = 1, \ldots, d,
\] on $\partial \X$.
\end{proof}

\begin{remark}
    The solution $w$ of (\ref{eq:opticsolution}) is the so-called complex geometric optics solution of (\ref{eq:helmholtz}), which was introduced in \cite{Astala:am2006, SylvesterUhlmann:am1987}. The proof of Proposition \ref{proposition 2.3} was partly motivated by \cite{Triki:ip2010}.
\end{remark}

We next construct the data $\mathcal{E}$, mentioned in Section
\ref{sec:intro}. Let us for the moment accept the following
proposition.
\begin{proposition}
    If $g$ is given, then one can make some measurements to obtain
$\absp(x)|u|^2$, $x \in \X,$ where $u$ solves (\ref{eq:helmholtz}).
\label{proposition 1.1}
\end{proposition}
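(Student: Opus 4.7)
The plan is to argue along the standard thermo-acoustic two-step chain, reducing the claim to the well-understood inverse problem for the acoustic wave equation with unknown initial data. The underlying physical picture is: illuminating $X$ with the electromagnetic boundary datum $g$ produces the electric field $u$ solving (\ref{eq:helmholtz}); the Ohmic absorption deposits a heat density proportional to $q(x)|u(x)|^2$; this heat creates a thermal expansion that launches an acoustic wave whose initial pressure is (up to a known Gr\"uneisen constant that we may normalize to one) exactly $q(x)|u(x)|^2$. The measurement step is the recording of this acoustic pressure at the boundary $\partial X$ over a sufficiently long time interval.

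The formal set-up I would write down is therefore the initial value problem
\begin{equation*}
\partial_{tt} p - \Delta p = 0 \quad \text{in } \mathbb{R}^d \times (0,T), \qquad
p|_{t=0} = q|u|^2, \qquad \partial_t p|_{t=0} = 0,
\end{equation*}
with the measured data $p(x,t)$ for $x \in \partial X$ and $t \in (0,T)$. The objective is to recover $p|_{t=0}$. I would then invoke one of the by-now-classical photo-acoustic inversion results (time reversal, spherical means inversion, or a Neumann-series reconstruction) to assert that $q|u|^2$ can be recovered stably and uniquely from the boundary observations, provided $q$ and $u$ are sufficiently regular and compactly supported inside $X$; these hypotheses are guaranteed by the Sobolev regularity of $q$ assumed elsewhere and by the smoothness of $u$ coming from standard elliptic regularity applied to (\ref{eq:helmholtz}).

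To wrap up, I would note that no new mathematics is really being produced here; the proposition is essentially a bookkeeping statement collecting the forward model from the electromagnetic problem (\ref{eq:helmholtz}) and the acoustic inverse problem into a single input-to-output map $g \mapsto q|u|^2$. I would cite \cite{BalRenUhmannZhou} for a detailed treatment of the thermo-acoustic forward/inverse chain and refer to the photo-acoustic tomography literature (for instance the surveys \cite{wang, kuchment_ejam}) for the acoustic reconstruction step, which is the only nontrivial inversion involved.

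The main obstacle, if any, is not in the structure of the argument but in being precise about what acoustic model is assumed and what admissible geometry of $\partial X$ guarantees a stable inversion of the wave equation from boundary data; once a visibility/observability condition on $\partial X$ is in place (automatic for smooth bounded domains in the common models), the conclusion is immediate. Hence I would keep the proof short, essentially pointing to the cited works rather than repeating the acoustic reconstruction proof.
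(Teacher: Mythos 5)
Your proposal is correct and matches the paper's treatment: the paper offers no proof at all, stating ``Let us for the moment accept the following proposition'' and deferring the construction of $E_1$ to \cite{BalRenUhmannZhou}, which is exactly the thermo-acoustic chain (initial pressure $q|u|^2$ for the wave equation, recovered from boundary pressure data by standard photo-acoustic inversion) that you spell out. So your write-up simply makes explicit the physical modeling and literature citation that the paper relies on implicitly; nothing more is required.
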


The following proposition holds.

\begin{proposition}
    Let $g_1, g_2 \in L^2(\partial \X).$ Denote by $u_j$ the solution of
    \begin{equation}
        \begin{aligned}
             &(\Delta + k^2 + ik \absp) u_j = 0, & & x \in \X, \\
             &\nu \cdot \nabla u_j - ik u_j = g_j, & & x \in \partial \X,
        \end{aligned} \h j = 1, 2.
    \end{equation} Then the function
    $
        q(x) u_2(x) \overline{u_1(x)}, x \in \X
    $ can be evaluated.
\label{proposition 2.5}
\end{proposition}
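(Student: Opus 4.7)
The plan is to reduce the problem to four applications of Proposition \ref{proposition 1.1} by exploiting linearity of the boundary value problem (\ref{1.2}) in the datum $g$, together with a polarization identity to recover the complex cross-term $q u_2 \overline{u_1}$ from modulus-squared measurements. Since (\ref{1.2}) is well-posed in $H^1(\X)$ for every $g \in L^2(\partial\X)$, the solution map $g \mapsto u$ is $\mathbb{C}$-linear, so the solution corresponding to the boundary datum $g_1+g_2$ is $u_1+u_2$, and the one corresponding to $g_1 + ig_2$ is $u_1 + iu_2$.

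First I would apply Proposition \ref{proposition 1.1} four times, with boundary data $g_1$, $g_2$, $g_1+g_2$ and $g_1+ig_2$. This yields, as measured quantities in $\X$, the functions $q|u_1|^2$, $q|u_2|^2$, $q|u_1+u_2|^2$ and $q|u_1+iu_2|^2$. Next I would write out
\begin{align*}
q|u_1+u_2|^2 &= q|u_1|^2 + q|u_2|^2 + 2q\,\mathrm{Re}(u_2\overline{u_1}), \\
q|u_1+iu_2|^2 &= q|u_1|^2 + q|u_2|^2 - 2q\,\mathrm{Im}(u_2\overline{u_1}),
\end{align*}
and solve these two identities for the real and imaginary parts of $q u_2\overline{u_1}$, obtaining
\[
q\,\mathrm{Re}(u_2\overline{u_1}) = \tfrac{1}{2}\bigl(q|u_1+u_2|^2 - q|u_1|^2 - q|u_2|^2\bigr),
\]
\[
q\,\mathrm{Im}(u_2\overline{u_1}) = \tfrac{1}{2}\bigl(q|u_1|^2 + q|u_2|^2 - q|u_1+iu_2|^2\bigr).
\]
Combining these as $q u_2\overline{u_1} = q\,\mathrm{Re}(u_2\overline{u_1}) + i\,q\,\mathrm{Im}(u_2\overline{u_1})$ gives the desired quantity pointwise in $\X$ from data that is directly accessible by the single-datum measurement procedure.

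There is really no hard step here: the statement is essentially the standard polarization identity dressed up for the Helmholtz boundary value problem, and the only things to verify are (a) $\mathbb{C}$-linearity of the solution map, which is immediate from the linearity of the PDE and boundary condition together with well-posedness, and (b) that Proposition \ref{proposition 1.1} applies unchanged to each of the four composite boundary data, which it does since they all lie in $L^2(\partial\X)$. So the entire proof will amount to writing the four combinations, invoking Proposition \ref{proposition 1.1} for each, and extracting $q u_2\overline{u_1}$ by the algebraic identities above.
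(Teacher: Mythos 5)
Your proposal is correct and follows essentially the same route as the paper: four measurements via Proposition \ref{proposition 1.1} (for $g_1$, $g_2$, and two linear combinations), linearity of the solution map, and a polarization identity to extract $q\,u_2\overline{u_1}$. The only cosmetic difference is that the paper uses the combination $ig_1+g_2$ (hence $q|iu_1+u_2|^2$) instead of your $g_1+ig_2$, which merely flips a sign in the imaginary-part formula; both versions are algebraically correct.
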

\begin{proof}
    Applying Proposition \ref{proposition 1.1} for $g_1 + g_2$ and then $ig_1 + g_2,$ we obtain the knowledge of
    \[
        q|u_1 + u_2|^2 \mbox{ and } q|iu_1 +  u_2|^2,
    \] respectively. Then the desired data $E_2$ is given by
\begin{equation}
E_2 = \frac{1}{2}(q|u_1 + u_2|^2 - q|u_1|^2 - q|u_2|^2) + \frac{i}{2}(q|i u_1 + u_2|^2 - q|u_1|^2 - q|u_2|^2),
\label{eq:E2def}
\end{equation}
which can be easily verified.
\end{proof}

Let $(g_j)_{j=1}^{d + 1}$ be a proper set of
measurements of (\ref{eq:helmholtz}) and $u_j$ be the solution of
(\ref{eq:helmholtz}) with $g$ replaced by $g_j$. From now on, we
have the knowledge of
\begin{equation}
    \mathcal{E} = (E_{j})_{j=1}^{d + 1},
\label{eq:data}
\end{equation} where $E_{j} = qu_1\overline u_j$, and $\mathcal{E}$ is, therefore, considered as the data to
reconstruct $q$.

We also need the following proposition. It plays an important role
to evaluate the derivative of the data with respect to $q$ in
Section \ref{sec:forw1} as well as some crucial properties.

\begin{proposition}
    Let $q \in L^{\infty}(\X)$ be such that $\inf q >0$.
For all $f \in L^2(\X),$ the problem
    \begin{equation}
        \begin{aligned}
            &(\Delta + k^2 + ik \absp) u = f, & & x \in \X, \\
            &\nu \cdot \nabla u - ik u = 0, & & x \in \partial \X,
        \end{aligned}
    \label{2.88}
    \end{equation} has a unique solution. Moreover, the solution
    satisfies
    \begin{equation}
        \|u\|_{L^2(\X)} \leq \frac{1}{k \inf q} \|f\|_{L^2(\X)}
    \label{2.99}
    \end{equation} and
    \begin{equation}
        \|u\|_{H^1(\X)} \leq \frac{\sqrt{(k^2 + 1) + k\inf q}}{k \inf q} \|f\|_{L^2(\X)}.
    \label{2.1010}
    \end{equation}
\label{proposition 2.6}
\end{proposition}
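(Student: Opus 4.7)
The plan is to obtain all three conclusions from a single energy identity: multiply the PDE by $\overline{u}$, integrate over $X$, and use Green's identity together with the Robin boundary condition. Explicitly,
\begin{equation*}
\int_X (\Delta u)\,\overline{u}\,dx = -\int_X |\nabla u|^2\,dx + \int_{\partial X}(\nu\cdot\nabla u)\,\overline{u}\,d\sigma = -\int_X|\nabla u|^2\,dx + ik\int_{\partial X}|u|^2\,d\sigma,
\end{equation*}
so the equation $(\Delta + k^2 + ikq)u = f$ tested against $\overline{u}$ yields
\begin{equation*}
-\int_X |\nabla u|^2\,dx + ik\int_{\partial X}|u|^2\,d\sigma + k^2\int_X |u|^2\,dx + ik\int_X q|u|^2\,dx = \int_X f\overline{u}\,dx.
\end{equation*}

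Next, I would extract the bound \eqref{2.99} by taking imaginary parts. This gives $k\int_{\partial X}|u|^2 + k\int_X q|u|^2 = \operatorname{Im}\int_X f\overline{u}$, and since $q \geq \inf q > 0$ and the boundary term is nonnegative, Cauchy--Schwarz produces $k(\inf q)\|u\|_{L^2}^2 \leq \|f\|_{L^2}\|u\|_{L^2}$, from which \eqref{2.99} follows. Taking real parts of the same identity gives $\|\nabla u\|_{L^2}^2 = k^2\|u\|_{L^2}^2 - \operatorname{Re}\int_X f\overline{u}$, and inserting \eqref{2.99} together with Cauchy--Schwarz yields
\begin{equation*}
\|\nabla u\|_{L^2}^2 \leq \frac{k^2\|f\|_{L^2}^2}{(k\inf q)^2} + \frac{\|f\|_{L^2}^2}{k\inf q}.
\end{equation*}
Adding $\|u\|_{L^2}^2 \leq \|f\|_{L^2}^2/(k\inf q)^2$ and factoring out $1/(k\inf q)^2$ then produces \eqref{2.1010} with exactly the stated constant.

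For well-posedness, uniqueness is immediate from \eqref{2.99}: any solution with $f=0$ must vanish in $L^2$. Existence I would obtain by the same Fredholm-alternative argument already invoked for the Helmholtz problem \eqref{eq:helmholtz} in the introduction. Namely, introduce the sesquilinear form
\begin{equation*}
a(u,v) = \int_X \nabla u\cdot\nabla\overline{v}\,dx - ik\int_{\partial X} u\overline{v}\,d\sigma - \int_X(k^2 + ikq)u\overline{v}\,dx
\end{equation*}
on $H^1(X)\times H^1(X)$, write it as $a_0(u,v) + b(u,v)$ where $a_0(u,v) := \int_X \nabla u\cdot\nabla\overline{v} + \int_X u\overline{v}$ is coercive and $b$ corresponds to a compact operator via the compact embedding $H^1(X)\hookrightarrow L^2(X)$ and the compactness of the trace, and then conclude from injectivity (already established) that the problem is solvable for every $f\in L^2(X)$.

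I do not expect real obstacles here: the argument is the classical one for Robin--Helmholtz problems, and the only care needed is to keep track of the arithmetic in combining the $L^2$ and gradient estimates so that the constant in \eqref{2.1010} comes out as $\sqrt{(k^2+1)+k\inf q}/(k\inf q)$ rather than something slightly larger.
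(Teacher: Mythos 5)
Your proposal is correct and follows essentially the same route as the paper: test the equation with $\overline{u}$, take imaginary parts to get \eqref{2.99} and real parts (combined with \eqref{2.99}) to get \eqref{2.1010}, with well-posedness handled by the standard Fredholm-alternative argument the paper simply calls ``well-known.'' The arithmetic leading to the constant $\sqrt{(k^2+1)+k\inf q}/(k\inf q)$ checks out exactly as you describe.
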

\begin{proof}
    The well-posedness of (\ref{2.88}) is well-known. Using the test function $u$ in (\ref{2.88}) and
    considering the imaginary and real parts of the resulting equation, we can establish (\ref{2.99})
    and (\ref{2.1010}),
    respectively.
\end{proof}
\section{The exact formula} \label{An Initial guess}

The main aim of this section is to reconstruct $q$ when a proper
set of measurements $(g_j)_{j=1}^{d + 1}$ of (\ref{eq:helmholtz})
and the data $\mathcal{E}$, defined in (\ref{eq:data}), are given.

Let
\begin{equation}
    \alpha_j = \frac{E_{j}}{E_{1}}, \h 2 \leq j \leq d + 1.
\label{2.2}
\end{equation} Then it is not hard to see that
\[
    u_j = \alpha_{j} u_1,
\] for $2 \leq j \leq d + 1.$
We have the following lemma.

\begin{lemma} Let $\beta = \Im (\overline u_1\nabla u_1)$. Then
    \begin{equation}
        -\mbox{div} \, \beta = k E_{1}, \h\mbox{in } \X.
    \label{2.3}
    \end{equation}
\end{lemma}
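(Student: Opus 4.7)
The plan is to derive the identity directly from the Helmholtz equation satisfied by $u_1$ by multiplying by $\overline{u_1}$ and extracting the imaginary part.

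First I would start from equation (\ref{1.2}) for $j=1$, namely $(\Delta + k^2 + ikq)u_1 = 0$ in $X$, and multiply by $\overline{u_1}$ to obtain
\begin{equation*}
    \overline{u_1}\Delta u_1 + k^2|u_1|^2 + ikq|u_1|^2 = 0 \quad \text{in } X.
\end{equation*}
The key observation is the product-rule identity
\begin{equation*}
    \mbox{div}(\overline{u_1}\nabla u_1) = \overline{u_1}\Delta u_1 + |\nabla u_1|^2,
\end{equation*}
so that $\overline{u_1}\Delta u_1 = \mbox{div}(\overline{u_1}\nabla u_1) - |\nabla u_1|^2$.

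Next I would take the imaginary part of the equation. Since $k^2|u_1|^2$ and $|\nabla u_1|^2$ are real, and since $\mbox{div}$ commutes with $\Im$, we get
\begin{equation*}
    \mbox{div}\,\Im(\overline{u_1}\nabla u_1) + kq|u_1|^2 = 0,
\end{equation*}
which is exactly $-\mbox{div}\,\beta = kE_1$ by the definitions of $\beta$ and $E_1 = q|u_1|^2$.

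The argument is essentially a one-line computation, so there is no real obstacle; the only thing to be careful about is the regularity needed to write $\mbox{div}(\overline{u_1}\nabla u_1)$ pointwise (or in the distributional sense), but since $u_1 \in H^1(X)$ solves an elliptic equation with smooth enough coefficients and $q \in H^m(X)$ with $m > 1+d/2$, interior regularity gives $u_1$ smooth enough that the identity holds classically in $X$.
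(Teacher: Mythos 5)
Your proof is correct and follows essentially the same route as the paper: multiply the Helmholtz equation by $\overline{u_1}$ and take the imaginary part, the only difference being that the paper carries out the computation in weak form against test functions $\varphi\in\mathcal{C}_c^{\infty}(\X,\mathbb{R})$ (so no extra regularity is needed), whereas you argue pointwise and invoke interior elliptic regularity to justify it. Either formulation is fine, and your regularity remark adequately covers the pointwise version.
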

\begin{proof}
Let $\varphi \in \mathcal{C}_c^{\infty}(\X, \mathbb{R})$ be an
arbitrary function. Then using $\varphi u_1 \in H^1_0(\X)$ as a
test function in
\[
    -\Delta u_1 = (k^2 + ik\absp  )u_1
\] yields
\[
    \io \varphi |\nabla u_1|^2 dx + \io \overline{u_1}\nabla u_1 \cdot \nabla \varphi dx = \io (k^2 + ik\absp ) |u|^2 \varphi dx.
\]
    Taking the imaginary part of the equation above gives
    \[
        -\io \mbox{div} \, (\Im \overline{u_1}\nabla u_1) \varphi dx = \io k \absp|u_1|^2\varphi dx = \io k E_{1}\varphi dx,
    \]
and (\ref{2.3}) follows.
\end{proof}

The following lemma plays an important role in the derivation of
an exact inversion formula for $\absp$.
\begin{lemma}
    For all $2 \leq j \leq d + 1$,
    \begin{equation}
        \nabla \alpha_j \cdot \left(\nabla \log \frac{\absp}{E_{1}} -  \frac{2i\absp\beta}{E_{1}}\right) =  \Delta \alpha_j.
    \label{2.4}
    \end{equation}
\end{lemma}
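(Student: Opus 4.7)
The plan is to derive the identity by substituting $u_j=\alpha_j u_1$ into the Helmholtz equation for $u_j$, using that $u_1$ itself solves (\ref{1.2}), and then rewriting the resulting logarithmic derivative $\nabla u_1/u_1$ purely in terms of the data $E_1$, $\absp$, and $\beta$.

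First, part $(i)$ of Definition \ref{def: proper set of measurement} ensures $|u_1|>0$ in $\X$, so $\alpha_j=u_j/u_1$ is well defined and $u_j=\alpha_j u_1$. I would expand the Helmholtz operator with the Leibniz rule,
$$(\Delta+k^2+ik\absp)(\alpha_j u_1)=u_1\Delta\alpha_j+2\nabla\alpha_j\cdot\nabla u_1+\alpha_j(\Delta+k^2+ik\absp)u_1.$$
The last term vanishes because $u_1$ solves (\ref{1.2}), and the left-hand side vanishes because $u_j$ does, so dividing by $u_1$ gives
$$\Delta\alpha_j=-\frac{2\nabla u_1}{u_1}\cdot\nabla\alpha_j.$$

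The next step is to identify $\nabla u_1/u_1=\nabla\log u_1$ with data-dependent expressions by splitting it into real and imaginary parts:
$$\Re\!\left(\frac{\nabla u_1}{u_1}\right)=\tfrac{1}{2}\nabla\log|u_1|^2,\qquad \Im\!\left(\frac{\nabla u_1}{u_1}\right)=\frac{\Im(\overline{u_1}\nabla u_1)}{|u_1|^2}=\frac{\beta}{|u_1|^2}.$$
Since $E_1=\absp\,u_1\overline{u_1}=\absp\,|u_1|^2$, we have $|u_1|^2=E_1/\absp$, hence $\nabla\log|u_1|^2=-\nabla\log(\absp/E_1)$ and $\beta/|u_1|^2=\absp\beta/E_1$. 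Combining these,
$$\frac{2\nabla u_1}{u_1}=-\nabla\log\frac{\absp}{E_1}+\frac{2i\absp\beta}{E_1},$$
and plugging this into the formula for $\Delta\alpha_j$ yields precisely (\ref{2.4}).

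The argument is essentially algebraic, so I do not anticipate a genuine obstacle; the only delicate point is the sign bookkeeping when separating the complex logarithmic derivative into its real and imaginary parts and matching them to $\nabla\log(\absp/E_1)$ and $\absp\beta/E_1$. Note that only part $(i)$ of Definition \ref{def: proper set of measurement} is needed here — the invertibility condition in part $(ii)$ is not invoked until (\ref{2.4}) is used later to solve for $\absp$.
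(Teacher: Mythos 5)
Your proof is correct and follows essentially the same route as the paper: substitute $u_j=\alpha_j u_1$ into the Helmholtz equation, cancel using the equation for $u_1$, and rewrite the remaining term $2\nabla u_1\cdot\nabla\alpha_j/u_1$ in terms of $E_1$, $\absp$, and $\beta$. The only (harmless) difference is bookkeeping: you divide by $u_1$ and split the logarithmic derivative $\nabla u_1/u_1$ into real and imaginary parts, whereas the paper multiplies by $\absp\overline{u_1}$ and uses $\nabla E_1=\absp\nabla|u_1|^2+E_1\nabla\log\absp$; your observation that only part (i) of Definition \ref{def: proper set of measurement} is needed here is also consistent with the paper.
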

\begin{proof} Let us fix $j \in \{2, \ldots, d+1 \}$.
Since $u_j$ is a solution of the Helmholtz equation under
consideration,
\begin{eqnarray*}
    (k^2 + ik\absp) \alpha_j u_1 &=& -\Delta \big( \alpha_j u_1 \big)\\
    &=&-\alpha_j \Delta u_1 - u_1\Delta \alpha_j - 2\nabla u_1 \cdot \nabla \alpha_j\\
    &=& (k^2 + ik\absp) \alpha_j u_1 - u_1\Delta \alpha_j  -  2\nabla u_1 \cdot \nabla \alpha_j.
\end{eqnarray*}
Therefore,
\begin{eqnarray*}
    -E_{1}\Delta \alpha_j &=& 2\absp \overline{u_1}\nabla u_1 \cdot \nabla \alpha_j\\
    &=& 2\absp \left(\Re \overline{u_1}\nabla u_1  + i\Im \overline{u_1}\nabla u_1 \right) \cdot \nabla \alpha_j \\
    &=&\absp \left(\nabla |u_1|^2  + 2 i\Im \overline{u_1}\nabla u_1 \right) \cdot \nabla \alpha_j.
\end{eqnarray*}
We have proved that
\[
    -E_{1}\Delta \alpha_j = \absp \left(\nabla |u_1|^2  + 2i\beta \right) \cdot \nabla \alpha_j,\]
 or equivalently,
\begin{equation}
    \absp\nabla|u_1|^2 \cdot \nabla \alpha_j = -E_{1}\Delta \alpha_j - 2i\absp\beta \cdot \nabla \alpha_j.
\label{2.6}
\end{equation}
On the other hand, differentiating the equation $E_{1} = \absp|u_1|^2$ gives
\begin{eqnarray*}
    \nabla E_{1} = \absp\nabla |u_1|^2 + E_{1}\nabla \log \absp.
\end{eqnarray*}
This, together with (\ref{2.6}), implies
\[
    (\nabla E_{1} - E_{1}\nabla \log \absp)\cdot \nabla \alpha_j
     = -E_{1}\Delta \alpha_j - 2i\absp\beta \cdot \nabla \alpha_j,
\]
and (\ref{2.4}), therefore, holds.
\end{proof}

We claim that the set
\[
(\nabla \alpha_j)_{j=2}^{d + 1}
\] is linearly independent for all $x \in \overline \X,$
where $\alpha_j$ was defined in (\ref{2.2}). We only prove this when $d = 2$. The proof when $d$ is larger
than $2$ can be done in the same manner. In fact, the linear independence of $\{\nabla \alpha_2, \nabla \alpha_3\}$
comes from the following
calculation:
\begin{eqnarray*}
    \det\left[
    \begin{array}{c}
                \nabla^T \alpha_2\\
                \nabla^T \alpha_3
    \end{array}
    \right] &=& \frac{1}{u_1^4}\det \left[
    \begin{array}{c}
                u_{1}\nabla^T u_{2} - u_{2}\nabla^T u_{1}\\
                u_{1}\nabla^T u_{3} - u_{3}\nabla^T u_{1}
    \end{array}
    \right]\\
    &=& \frac{1}{u_{1}^4} \left(\det \left[
            \begin{array}{c}
                u_{1}\nabla^T u_{2}\\u_{1}\nabla^T u_{3} - u_{3}\nabla^T u_{1}
            \end{array} \right]  \right. \\ && \h\h\h\h\h\h\h\left.
                - u_{2}\det \left[
            \begin{array}{c}
                \nabla^T u_{1}\\u_{1}\nabla^T u_{3} - u_{3}\nabla^T u_{1}
            \end{array} \right]
        \right)\\
    &=& \frac{1}{u_{1}^3} \left(u_{1}\det \left[
            \begin{array}{c}
                \nabla^T u_{2}\\\nabla^T u_{3}
            \end{array} \right]
                + u_{3}\det \left[
                    \begin{array}{c}
                      \nabla^T u_{1}\\\nabla^T u_{2}
                    \end{array}
                \right]         \right. \\ && \h\h\h\h\h\h\h\left.
                - u_{2}\det \left[
            \begin{array}{c}
                \nabla^T u_{1}\\ \nabla^T u_{3}
            \end{array} \right]
        \right)\\
    &=& \frac{1}{u_{1}^3}\det \left[
        \begin{array}{cc}
            u_{1} & \nabla^T u_{1}\\
            u_{2} & \nabla^T u_{2}\\
            u_{3} & \nabla^T u_{3}
        \end{array}
    \right] \not = 0.
\end{eqnarray*}
Here, part (ii) in Definition \ref{def: proper set of measurement}
has been used. Since the $d\times d$ matrix
\begin{equation}
A = [\nabla^T \alpha_{j+1}]_{1 \leq j \leq d }, \quad \quad A_{jl}= \partial_l \alpha_{j+1} ,
\label{2.888}
\end{equation} is invertible, we can solve system (\ref{2.4}) to get
\begin{equation}
    \nabla \log \frac{\absp}{E_{1}} -  \frac{2i\absp\beta}{E_{1}} =
    a,
\label{2.8}
\end{equation}
where $a$ is the vector $a= A^{-1}[ (\nabla^T {A}^T)^T ]$.

We are now ready to evaluate $\absp$. We first split the real and
the imaginary parts of (\ref{2.8}) to get
\begin{equation}
    \nabla \log \frac{\absp}{E_{1}} = \frac{\nabla \absp}{\absp} -  \nabla \log E_{1} = \Re (a)
\label{2.9}
\end{equation} and
\begin{equation}
    \beta = -\frac{E_{1}\Im (a)}{2 \absp}.
\label{2.10}
\end{equation}
Then, differentiating (\ref{2.10}), we have
\[
    \mbox{div} \, \beta = \frac{E_{1}\Im (a) \cdot \nabla \absp}{2\absp^2} - \frac{\mbox{div} \, (E_{1}\Im (a))}{2\absp}.
\]
This, together with (\ref{2.3}) and (\ref{2.9}), implies
\begin{eqnarray*}
    \absp &=& -\frac{E_{1}(\Re (a) + \nabla \log E_{1}) \cdot \Im (a) - \mbox{div}\, (E_{1}\Im (a))}{2k E_{1}}
        \\&=& - \frac{E_{1}\Re (a) \cdot \Im (a) + \nabla E_{1} \cdot \Im (a)}{2kE_1}
        + \frac{E_{1}\mbox{div} \, \Im (a) + \nabla E_{1} \cdot \Im (a)}{2k E_{1}}\\
        &=& - \frac{\Re (a) \cdot \Im (a) - \mbox{div} \, \Im (a)}{2k}.
\end{eqnarray*}

The results above are summarized in the following theorem.
\begin{theorem}
    Given  a proper set of measurements $(g_j)_{j=1}^{d + 1}$ so that the matrix $A,$ defined in (\ref{2.888}),
     is known and invertible.
 Then,
\begin{equation}
    \absp(x) = \frac{- \Re (a) \cdot \Im (a) +   \mbox{div} \, \Im(a)}{2k},
\label{2.12}
\end{equation}
where $a= A^{-1}[ (\nabla^T {A}^T)^T ]$ and $A= ( \partial_l
\alpha_{j+1} )_{j,l=1,\ldots,d}$.

 \label{Theorem 2.3}
\end{theorem}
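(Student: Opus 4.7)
The theorem is essentially a packaging of the chain of identities established in the two preceding lemmas, so my plan is to reassemble those pieces and push through the final algebraic simplification.

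First I would introduce the ratios $\alpha_j = E_j/E_1 = u_j/u_1$ for $j = 2, \ldots, d+1$, which are computable from the data $\mathcal{E}$ alone. Applying the second preceding lemma, I obtain the $d$ scalar identities
\begin{equation*}
\nabla \alpha_{j+1} \cdot \left( \nabla \log \frac{q}{E_1} - \frac{2iq\beta}{E_1} \right) = \Delta \alpha_{j+1}, \qquad j = 1, \ldots, d,
\end{equation*}
where $\beta = \Im(\overline{u_1} \nabla u_1)$. Stacking these as rows gives the linear system $A\, v = (\nabla^T A^T)^T$ for the unknown vector $v = \nabla \log(q/E_1) - 2iq\beta/E_1$. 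The hypothesis that $(g_j)$ is a proper set of measurements, together with the matrix-determinant identity already computed in the text (which reduces $\det A$ to $u_1^{-3}\det[u_j,\nabla^T u_j]$), guarantees that $A$ is pointwise invertible, so I may write $v = a := A^{-1}(\nabla^T A^T)^T$.

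Next I would separate real and imaginary parts of $v = a$. Since $q$, $E_1$, and $\beta$ are real-valued, this yields the two equations
\begin{equation*}
\frac{\nabla q}{q} - \nabla \log E_1 = \Re(a), \qquad \beta = -\frac{E_1 \Im(a)}{2q}.
\end{equation*}
The first equation expresses $\nabla q$ in terms of known quantities and $q$ itself; the second expresses $\beta$ similarly. The key move is to take the divergence of the second equation: using the product rule,
\begin{equation*}
\operatorname{div} \beta = \frac{E_1 \Im(a)\cdot \nabla q}{2q^2} - \frac{\operatorname{div}(E_1 \Im(a))}{2q},
\end{equation*}
and then to invoke the first preceding lemma, $-\operatorname{div}\beta = k E_1$, to eliminate $\beta$ altogether.

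Finally I would substitute $\nabla q / q = \Re(a) + \nabla \log E_1$ into the expression for $\operatorname{div}\beta$ and expand $\operatorname{div}(E_1 \Im(a)) = E_1 \operatorname{div}\Im(a) + \nabla E_1 \cdot \Im(a)$. The terms involving $\nabla E_1 \cdot \Im(a)$ cancel exactly, the factor of $E_1$ cancels throughout, and multiplying through by $2k$ leaves precisely
\begin{equation*}
q = \frac{-\Re(a)\cdot \Im(a) + \operatorname{div}\Im(a)}{2k},
\end{equation*}
which is (2.12). The only real content beyond bookkeeping is the cancellation in this last step, which I expect to be routine but is the one place where an error would most plausibly slip in; I would verify it carefully by writing out both sides term by term before declaring the proof complete.
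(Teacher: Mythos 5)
Your proposal is correct and follows essentially the same route as the paper's own derivation: the same two lemmas, the same stacking of the identities into the system $A\,v=(\nabla^T A^T)^T$ solved by invertibility of $A$ (justified via the determinant identity and the proper-set-of-measurements hypothesis), the same split into real and imaginary parts, and the same divergence-plus-substitution step. The final cancellation of the $\nabla E_1\cdot\Im(a)$ terms does go through exactly as you anticipate, matching the computation preceding Theorem \ref{Theorem 2.3}.
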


\begin{remark}
    Although in the proof of Theorem \ref{Theorem 2.3}, we wrote some
    notations
     requiring the first and second derivatives of $\mathcal{E}$ at a single point
     $x \in \X$, it is not necessary to impose the
    smoothness
    conditions for $\mathcal{E}$. The reason is that one can make the arguments and establish (\ref{2.4}) in the
    weak sense. We argued, using strong forms of differential equations, only for simplicity.
\end{remark}

\begin{remark}
    Formula (\ref{2.12}) is unstable in the sense that if there are some noises occurring when
    we measure the data $E_{j},$ $1 \leq j \leq d + 1$, then $\absp$, given by (\ref{2.12}), might be far away
    from the actual $\absp$ since the right-hand side of (\ref{2.12}) depends on the derivatives of the noise (up to the third order).
\end{remark}

\section{The differentiability of the data map and its inverse}
\label{sec:forw1}

Let $0<q_{\rm min} < q_{\rm max}$. Let
\[
    L^\infty_+(\X)= \bigg\{p \in L^{\infty}(\X):  q_{\rm min} < p< q_{\rm max} \mbox{ in } \X \bigg\}.
\] Then, $L^\infty_+(\X)$ is an open set in $L^\infty(\X)$.
We define the solution and the data map as
\begin{equation}
    \begin{array}{rcl}
    u : L^\infty_+(\X) &\rightarrow& H^1(\X)\\
    q &\mapsto& u[q]
    \end{array}
\label{4.1}
\end{equation}
and
\begin{equation}
    \begin{array}{rcl}
    F : L^\infty_+(\X) &\rightarrow& L^2(\X)\\
    q &\mapsto& F[q] = q|u[q]|^2 ,
    \end{array}
\label{4.2}
\end{equation}
where $u[q]$ is the solution of (\ref{eq:helmholtz}). The map $F$
is well-defined because of the Sobolev embedding theorems and the
fact that $d = 2$ or $3$, which guarantees that $u \in L^4(\X)$.

The main purpose of this section is to study the differential
operator, $\D F[q]$, of $F$ and show that it is invertible
provided that $q_{\max}$ is small enough.

\begin{lemma}
    The map $u$, defined in (\ref{4.1}), is Fr\'{e}chet differentiable in $L^{\infty}_+(X)$. Its derivative at the
    function $q$ is given by
    \begin{equation}
        \D u[q](\rho) = v(\rho), \h \forall \rho \in B_q,
    \end{equation} where $B_q \subset L^{\infty}(\X)$ is an open neighborhood
    of $q$ in $L^{\infty}(\X)$ and $v(\rho)$ is the solution of
    \begin{equation}
        \label{eq:veq}
            \begin{aligned}
                 &(\Delta + k^2 + ik \absp) v = -ik\rho u[\absp], & & x \in \X, \\
                 &\nu \cdot \nabla v - ik v = 0, & & x \in \partial \X.
        \end{aligned}
\end{equation}
Consequently, $F$ is also Fr\'{e}chet differentiable and
    \begin{equation}
        \D F[q] \rho = \rho |u[q]|^2 + 2q \Re (u[q]\overline v(\rho)), \h \forall q \in L^{\infty}_+(\X), \rho \in B_q.
    \label{4.5}
    \end{equation}
\label{lemma 4.1}
\end{lemma}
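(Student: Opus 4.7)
The plan is to formally linearize the Helmholtz problem, write down the candidate derivative, and then control the remainder using Proposition \ref{proposition 2.6}. Throughout, fix $q \in L^\infty_+(\X)$ and choose $B_q$ a small $L^\infty$-neighborhood of the origin such that $q+\rho \in L^\infty_+(\X)$ for every $\rho \in B_q$. Write $u_0 = u[q]$, $u_\rho = u[q+\rho]$, $w_\rho = u_\rho - u_0$, and $v = v(\rho)$ for the solution of \eqref{eq:veq}.

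First I would derive the equations satisfied by $w_\rho$ and by the remainder $r_\rho := w_\rho - v$. Subtracting the Helmholtz equations for $u_\rho$ and $u_0$ gives
\begin{equation*}
(\Delta + k^2 + ik q)w_\rho = -ik\rho\, u_\rho, \quad \nu\cdot\nabla w_\rho - ik w_\rho = 0 \ \text{on }\partial\X,
\end{equation*}
and subtracting \eqref{eq:veq} from this yields
\begin{equation*}
(\Delta + k^2 + ik q)r_\rho = -ik\rho\, w_\rho, \quad \nu\cdot\nabla r_\rho - ik r_\rho = 0 \ \text{on }\partial\X.
\end{equation*}
Applying Proposition \ref{proposition 2.6} twice (with $\inf q \geq q_{\min}>0$), I obtain
\begin{equation*}
\|w_\rho\|_{L^2(\X)} \leq C\|\rho\|_{L^\infty}\|u_\rho\|_{L^2(\X)}, \qquad \|r_\rho\|_{H^1(\X)} \leq C\|\rho\|_{L^\infty}\|w_\rho\|_{L^2(\X)}.
\end{equation*}
Combined with a uniform $L^2$-bound on $u_\rho$ for $\rho \in B_q$ (which follows from Proposition \ref{proposition 2.6} applied to the equation with source $0$ and inhomogeneous Robin data $g$, after a standard lifting argument, using that the boundary datum is fixed), this gives $\|r_\rho\|_{H^1(\X)} = O(\|\rho\|_{L^\infty}^2)$, which is precisely the Fr\'{e}chet differentiability of $u$ at $q$ with derivative $v(\rho)$. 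Linearity and continuity of $\rho\mapsto v(\rho)$ follow from the linearity of \eqref{eq:veq} in $\rho$ and the same a priori estimate.

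For the differentiability of $F$, I would simply expand. Using $|u_\rho|^2 = |u_0|^2 + 2\Re(\overline{u_0}\,w_\rho) + |w_\rho|^2$ and $w_\rho = v + r_\rho$, and noting $\Re(\overline{u_0}v) = \Re(u_0\overline{v})$, I get
\begin{equation*}
F[q+\rho]-F[q] - \bigl(\rho|u_0|^2 + 2q\Re(u_0\overline{v})\bigr) = \rho\bigl(2\Re(\overline{u_0}w_\rho)+|w_\rho|^2\bigr) + 2q\Re(\overline{u_0}\,r_\rho) + q|w_\rho|^2.
\end{equation*}
Each term is bounded in $L^2(\X)$ by the Sobolev embedding $H^1(\X)\hookrightarrow L^4(\X)$ (valid for $d=2,3$): e.g., $\|q\,\overline{u_0}\,r_\rho\|_{L^2} \leq q_{\max}\|u_0\|_{L^4}\|r_\rho\|_{L^4} \leq C\|r_\rho\|_{H^1} = O(\|\rho\|_{L^\infty}^2)$, and similarly for the other terms using $\|w_\rho\|_{H^1} = O(\|\rho\|_{L^\infty})$. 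This establishes \eqref{4.5}.

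The only genuinely delicate point is the uniform $H^1$-bound on $u_\rho$ (hence on $w_\rho$) as $\rho$ varies in a neighborhood of $0$; once this is in hand, everything reduces to Proposition \ref{proposition 2.6} and the Sobolev embedding. Everything else is routine bookkeeping.
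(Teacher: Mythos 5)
Your proposal is correct and follows essentially the paper's own route: both write the PDEs satisfied by $u[q+\rho]-u[q]$ and by the remainder $u[q+\rho]-u[q]-v(\rho)$, then apply the a priori estimates of Proposition \ref{proposition 2.6} twice. The only noteworthy difference is that the paper sidesteps your ``delicate point'' (a uniform bound on $u[q+\rho]$) by writing the difference equation with coefficient $q+\rho$ and source $-ik\rho\, u[q]$, so the estimate involves only the fixed $u[q]$ and $\inf(q+\rho)$; apart from that, your $H^1$ control of the remainder and the explicit Sobolev-embedding expansion for $F$ are a mild strengthening of the paper's $L^2$ bound and its brief appeal to the chain rule.
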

\begin{proof}
    It is sufficient to show that
        \begin{equation}
            \lim_{\|\rho\|_{L^{\infty}(\X)} \rightarrow 0} h(\rho) =
            0,
        \label{4.6}
        \end{equation}
    where
        \[
            h(\rho) = \frac{\|u[q + \rho] - u[q] - v(\rho)\|_{L^2(\X)}}{\|\rho\|_{L^{\infty}(\X)}}.
        \]
    In fact, since $u[q + \rho] - u[q] - v(\rho)$ solves the problem
        \[
        \begin{aligned}
                 &(\Delta + k^2 + ik \absp)(u[q + \rho] - u[q] - v(\rho)) = -ik\rho (u[q + \rho] - u[\absp]), & & x \in \X, \\
                 &\nu \cdot \nabla (u[q + \rho] - u[q] - v(\rho)) - ik (u[q + \rho] - u[q] - v(\rho)) = 0, & & x \in \partial \X,
        \end{aligned}
        \]
we can apply inequality (\ref{2.99}) to obtain
    \begin{equation}
        \|u[q + \rho] - u[q] - v(\rho)\|_{L^2(\X)} \leq \frac{\|\rho\|_{L^{\infty}(\X)}\|(u[q + \rho] - u[\absp])\|_{L^2(\X)}}{\inf q}.
    \label{4.7}
    \end{equation}
    On the other hand, since $u[q + \rho] - u[q]$ satisfies
        \[
        \begin{aligned}
                 &(\Delta + k^2 + ik (\absp + \rho))(u[q + \rho] - u[q]) = -ik\rho u[q], & & x \in \X, \\
                 &\nu \cdot (\nabla u[q + \rho] - u[q]) - ik (u[q + \rho] - u[q]) = 0, & & x \in \partial \X,
        \end{aligned}
        \]
inequality (\ref{2.99}), again, implies
        \begin{equation}
            \|u[q + \rho] - u[q]\|_{L^2(\X)} \leq \frac{\|\rho\|_{L^{\infty}(\X)}\|u[q]\|_{L^2(\X)}}{\inf (q + \rho)}.
        \label{4.8}
        \end{equation}
Combining (\ref{4.7}) and (\ref{4.8}) yields (\ref{4.6}). Using
the chain rule in differentiation, we readily get (\ref{4.5}).
\end{proof}

Using regularity theory, we see that $u[q]$ belongs to
$L^{\infty}(\X)$ in the two-dimensional case. In three dimensions,
we should assume that $g \in H^{1/2}(\partial X)$ in order to
claim that $u[q] \in L^{\infty}(\X)$. Hence, $\D F[q]$ can be
extended so that its domain is $L^2(\X)$. By abuse of notation, we
denote the extended operator still by $\D F[q]$. The following key
lemma of this section establishes an estimate of the $L^2(\X)$
norm of $v(\rho)$, the solution to \eqref{eq:veq}, in terms of the
$L^2(\X)$ norm of the source $\rho u[q]$. A corollary of this
result allows us to show the invertibility of $\D F[q]$ from
$L^2(\X)$ to $L^2(\X)$.

\begin{lemma}\label{lem:vL2bdd} Assume that the origin $0$ is included in
$\X$ and define \[\diam(\X) = \sup_{x \in \partial \X} |x|.\]
Suppose that $\X$ is star-shaped and balanced with respect to the
origin so that \[x \cdot \nu_x \ge \gamma \diam(\X) \] for some
positive number $\gamma$. If
\[\|q\|_{L^\infty} \diam(\X) \le \frac{1}{4},\] and $k>2$, then
\begin{equation}
\|v(\rho)\|_{L^2} \le \eta \|\rho u[q]\|_{L^2},
 \label{eq:vL2bdd}
 \end{equation}
 where
\begin{equation}
    \eta = \sqrt{\frac{8(1+\gamma^{-1})^2 + 2d + 29}{(11-2d)}} \max \{ \diam(\X), 1\}.
 \label{eq:vL2bdd0}
 \end{equation}
\end{lemma}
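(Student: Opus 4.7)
The bound in Proposition \ref{proposition 2.6} is unsuitable here because its constant $1/(k\inf q)$ degenerates as $q\to 0$ and is not uniform in $k$. I would instead follow the classical Morawetz--Rellich multiplier method, which produces $k$-uniform $L^2$-estimates for Helmholtz problems with impedance boundary conditions on star-shaped domains; this is precisely the setting captured by the hypotheses $x\cdot\nu\ge\gamma\,\diam(\X)$ and $\nu\cdot\nabla v - ikv = 0$.

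First I would multiply \eqref{eq:veq} by $\overline v$, integrate by parts, and use the Robin condition $\partial_\nu v = ikv$. Splitting into real and imaginary parts yields
\begin{equation*}
\|\nabla v\|_{L^2}^2 = k^2\|v\|_{L^2}^2 - k\,\Im\int_\X \rho\, u[q]\,\overline v\, dx,
\qquad
\|v\|_{L^2(\partial\X)}^2 + \int_\X q|v|^2\, dx = -\Re\int_\X \rho\, u[q]\,\overline v\, dx,
\end{equation*}
so in particular $\|v\|_{L^2(\partial\X)}^2 \le \|\rho u[q]\|_{L^2}\|v\|_{L^2}$, and $\|\nabla v\|_{L^2}^2$ is pinned to $k^2\|v\|_{L^2}^2$ modulo a controllable lower-order term.

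Second I would test the equation with the Rellich multiplier $x\cdot\nabla\overline v$ and take real parts. The classical Rellich identity
\[
2\Re\int_\X (\Delta v)(x\cdot\nabla\overline v)\, dx
= (2-d)\|\nabla v\|_{L^2}^2 + 2\Re\int_{\partial\X}(\partial_\nu v)(x\cdot\nabla\overline v)\, dS - \int_{\partial\X}(x\cdot\nu)|\nabla v|^2\, dS,
\]
together with the elementary identity $2\Re\int_\X v(x\cdot\nabla\overline v)\, dx = \int_{\partial\X}(x\cdot\nu)|v|^2\, dS - d\int_\X|v|^2\, dx$, transforms \eqref{eq:veq} into a relation of the shape
\[
(d-2)\|\nabla v\|_{L^2}^2 + d\,k^2\|v\|_{L^2}^2 = \mathcal{B} + 2k\int_\X q\,\Im[\overline v(x\cdot\nabla v)]\, dx + 2k\,\Im\int_\X \rho u[q]\,(x\cdot\nabla\overline v)\, dx,
\]
with $\mathcal{B}$ a boundary integral. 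Substituting $\nabla v = \nabla_T v + ikv\,\nu$ into $\mathcal{B}$ lets me control the sign-indefinite cross term $2k\,\Im[\overline v(x\cdot\nabla_T v)]$ by Cauchy--Schwarz, using $|x|^2/(x\cdot\nu)\le \diam(\X)/\gamma$ to absorb the $(x\cdot\nu)|\nabla_T v|^2$ contribution; this leaves only a $k^2$-weighted boundary $|v|^2$ integral, which is controlled through the first step and gives the $(1+\gamma^{-1})^2$ factor in \eqref{eq:vL2bdd0}.

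Finally, the interior dissipative term is bounded by $2k\|q\|_{L^\infty}\diam(\X)\|v\|_{L^2}\|\nabla v\|_{L^2}\le \tfrac12(k^2\|v\|_{L^2}^2 + \|\nabla v\|_{L^2}^2)$ via the assumption $\|q\|_{L^\infty}\diam(\X)\le 1/4$ and Young's inequality, while the source integral is split by Young's inequality so that its $\|\nabla v\|_{L^2}^2$ portion is later absorbed through the real-part identity of the first step. Eliminating $\|\nabla v\|_{L^2}^2$ by means of that identity produces an inequality of the form
\[
(11-2d)\,k^2\|v\|_{L^2}^2 \;\le\; C(\gamma,\diam(\X))\,k^2\|\rho u[q]\|_{L^2}^2 + (\text{lower-order in }k),
\]
with the hypothesis $k>2$ just enough to absorb the lower-order residuals, after which dividing by $k^2$ yields \eqref{eq:vL2bdd}. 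The main obstacle is the bookkeeping at this last step: the Young splittings must be calibrated so that no $k^3$- or $k^4$-term survives on either side, and so that the accumulated constants reassemble into the explicit $\eta$ of \eqref{eq:vL2bdd0}, whose coefficient $(11-2d)^{-1}$ leaves essentially no slack.
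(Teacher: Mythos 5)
Your proposal follows essentially the same route as the paper's proof: the paper likewise tests the weak formulation with $v$ to obtain the boundary and gradient estimates, then applies the Rellich multiplier $x\cdot\nabla v$ (mimicking Melenk's thesis), uses the star-shapedness condition $x\cdot\nu\ge\gamma\,\diam(\X)$, and closes with Young's inequalities calibrated by $\|q\|_{L^\infty}\diam(\X)\le\frac14$ and $k>2$, eliminating $\|\nabla v\|_{L^2}^2$ through the first-step identity. The only differences are bookkeeping choices (the paper keeps the full boundary gradient term and absorbs the cross term by Young rather than splitting $\nabla v$ into tangential and normal parts, and splits the interior $q$-term slightly differently), and its explicit calibration confirms that the constants do reassemble into the stated $\eta$.
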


\begin{proof}
Let us define the bilinear form
 \begin{equation} \label{bilinfg}
  B[v,w] = -\int_{\X} \nabla v \cdot \nabla \overline{w} dx + k^2 \int_{\X} v\overline{w} dx + ik \int_{\X} q v \overline{w} dx + ik \int_{\partial \X} v \overline{w} ds,
 \end{equation}
 and the linear form
 \begin{equation}
  G[w] = - \int_{\X} ik\rho u[q] \overline{w} dx.
 \end{equation}
Then the weak solution of \eqref{eq:veq} is characterized by $v$ satisfying
 \begin{equation}
\label{eq:weakveq}
  B[v,w] = G[w], \quad \forall w \in H^1(\X).
 \end{equation}

Using $w = v$ in (\ref{eq:weakveq}) and considering the imaginary and real parts separately, we have
 \begin{equation}
  \begin{aligned}
   \int_{\partial \X} |v|^2 ds + \int_{\X} \absp |v|^2 dx& \le \left| \int_{\X} \rho u \overline{v} dx \right|,\\
   \int_{\X} |\nabla v|^2 dx - k^2 \int_{\X} |v|^2 dx&\le k\left| \int_{\X}  \rho u \overline{v} dx \right|.
  \end{aligned}
 \label{eq:ineq_v}
 \end{equation}
It follows from these inequalities that
 \begin{equation}
  \|v\|_{L^2(\partial \X)}^2 \le \|\rho u\|_{L^2} \|v\|_{L^2},
 \label{eq:bdbdd}
 \end{equation}
 and
 \begin{equation}
  \|\nabla v\|_{L^2}^2 \le (k^2 + 1) \|v\|^2_{L^2} + \frac{k^2}{4}\|\rho u\|_{L^2}^2.
 \label{eq:Dvbdd}
 \end{equation}

To estimate $\|v\|_{L^2}$, we mimic the technique used in
\cite[Chapter 8]{JMM-thesis}.
We have
 \begin{equation*}
  \Re \big(\nabla v \cdot \nabla(x\cdot \nabla \overline{v}) \big)
  = |\nabla v|^2 + x\cdot \nabla \Big(\frac{|\nabla v|^2}{2}\Big), \quad
  \Re \big(v (x\cdot \overline{\nabla v}) \big) = x\cdot \nabla \Big(\frac{|v|^2}{2}\Big).
 \end{equation*}
 Integrating the first equation above gives
 \begin{equation*}
 \begin{aligned}
  \int_{\X} \Re \big(\nabla v \cdot \nabla(x\cdot \nabla \overline{v})\big) \, dx &= \int_{\X} |\nabla v|^2 dx + \frac{1}{2} \int_{\X} \nabla \cdot \left(x |\nabla v|^2 \right) - (\nabla \cdot x) |\nabla v|^2 dx \\
  & = \frac{1}{2} \int_{\partial \X} (\nu \cdot x) |\nabla v|^2 ds + (1-\frac{d}{2}) \|\nabla v\|_{L^2}^2.
 \end{aligned}
 \end{equation*}
The second term above is due to the fact that $\nabla \cdot x =
d$. Similarly,
\begin{equation*}
 \begin{aligned}
  k^2 \int_{\X} \Re \big( v (x\cdot \nabla \overline{v})\big) \, dx &=  \frac{k^2}{2} \int_{\X} \nabla \cdot \left(x |v|^2\right) - (\nabla \cdot x) |v|^2dx \\
  & = - \frac{d k^2}{2} \int_{\X} |v|^2 dx + \frac{k^2}{2} \int_{\partial \X} (\nu \cdot x) |v|^2 ds.
 \end{aligned}
 \end{equation*}
 Consequently, taking $w= - x \cdot \nabla v$ in (\ref{bilinfg})  we find
 \begin{equation*}
 \begin{aligned}
 - \Re B[v, x\cdot \nabla v] = \frac{d k^2}{2}
 \|v\|^2_{L^2} + \frac{1}{2} \int_{\partial \X} (x\cdot \nu) |\nabla v|^2 ds -
 \frac{k^2}{2} \int_{\partial \X} (x\cdot \nu) |v|^2 \, ds\\
 + (1 - \frac{d}{2}) || \nabla v||_{L^2}^2 + \Re \left( -ik\int_{\X} \absp v (x\cdot \nabla \overline{v})\, dx
  - ik \int_{\partial \X} v (x \cdot \nabla \overline{v}) ds \right).
 \end{aligned}
 \end{equation*}
 Equate the above expression with the real part of $ - \Re G[x \cdot \nabla v]$, {\it i.e.}, $\Re ik \int
 \rho u x\cdot \nabla \overline{v}\, dx$. We then obtain the estimate (using the fact that $x\cdot \nu \geq \gamma \diam(\X)$):
\begin{eqnarray*}
&&\hspace*{-.24in} \frac{dk^2}{2} \|v\|^2_{L^2} + \frac{\diam(\X) \gamma}{2} \|\nabla v\|^2_{L^2(\partial \X)} \le \frac{k^2 \diam(\X)}{2}
\|v\|^2_{L^2(\partial \X)} + (\frac{d}{2}-1) \|\nabla v\|_{L^2}^2 \\
&& \h + k\ \diam(\X) \left( \|\absp\|_{L^\infty} \|v\|_{L^2} \|\nabla v\|_{L^2} + \|v\|_{L^2(\partial \X)} \|\nabla v\|_{L^2(\partial \X)} + \|\rho u\|_{L^2} \|\nabla v\|_{L^2} \right).
\end{eqnarray*}

On the other hand, it follows from Young's inequality that
 \begin{equation*}
  \|v\|_{L^2(\partial \X)}\|\nabla v\|_{L^2(\partial \X)} \le \epsilon \|\nabla v\|_{L^2(\partial \X)}^2 + \frac{1}{4\epsilon} \|v\|_{L^2(\partial \X)}^2,
 \end{equation*}
for all $\epsilon > 0$. We choose $\epsilon$ such that $k\epsilon  = \gamma/2$ to get
 \begin{eqnarray*}
 && \hspace*{-.24in}  \frac{k^2 \diam(\X)}{2} \|v\|_{L^2(\partial \X)}^2 + k\ \diam(\X)\|v\|_{L^2(\partial \X)}\|\nabla v\|_{L^2(\partial \X)} \\
 && \h \le \frac{\gamma \diam(\X)}{2} \|\nabla v\|_{L^2(\partial \X)}^2 + \frac{k^2 \diam(\X)}{2} \frac{\gamma+1}{\gamma} \|v\|_{L^2(\partial \X)}^2.
 \end{eqnarray*}
 Recall \eqref{eq:bdbdd}. The left-hand side of the inequality above can be further bounded by
 \begin{equation}
  \frac{\gamma \diam(\X)}{2} \|\nabla v\|_{L^2(\partial \X)}^2 + \frac{k^2 \diam(\X)}{2} \frac{\gamma+1}{\gamma} (\epsilon_1 \|v\|_{L^2}^2 + \frac{1}{4\epsilon_1} \|\rho u\|_{L^2}^2).
 \label{eq:bterm}
 \end{equation}

Applying Young's inequality to the term $\|\rho u\|_{L^2} \|\nabla
v\|_{L^2}$ with $\epsilon k \diam(\X) = 1/8$ yields
\begin{equation}
  k\diam(\X)\|\rho u\|_{L^2} \|\nabla v\|_{L^2} \le \frac{1}{8}\|\nabla v\|_{L^2}^2 + 2 k^2 \diam^2(\X) \|\rho u\|_{L^2}^2.
 \label{eq:fterm}
 \end{equation}

Applying the same technique to the term $\|v\|_{L^2} \|\nabla v\|_{L^2}$ shows
 \begin{equation}
  k\diam(\X)\|\absp\|_{L^\infty} \|v\|_{L^2} \|\nabla v\|_{L^2} \le \frac{1}{8}\|\nabla v\|_{L^2}^2 + 2k^2 \diam^2(\X) \|\absp\|_{L^\infty}^2 \|v\|_{L^2}^2.
 \label{eq:vterm}
 \end{equation}

Finally, recalling estimate \eqref{eq:Dvbdd} and combining the
above inequalities, we have
 \begin{equation}
 \begin{aligned}
  \frac{d}{2}\|v\|_{L^2}^2 \le  \left(\frac{\diam(\X)}{2}
  \frac{\gamma+1}{\gamma} \epsilon_1 + { (\frac{d}{2} - \frac{3}{4}) (1+k^{-2})}
  + 2 \|\absp\|^2_{L^\infty} \diam^2(\X) \right) \|v\|_{L^2}^2 \\
  + \left(\frac{\diam(\X)}{8\epsilon_1} \frac{\gamma+1}{\gamma}
  + 2\diam^2(\X) + \frac{1}{4}(\frac{d}{2} - \frac{3}{4})\right) \|\rho u\|^2_{L^2}.
  \end{aligned}
 \end{equation}

Suppose that the wave number $k$ is larger than $2$ and the
product $\|\absp \|_{L^\infty} \diam(\X)$ is smaller
 than $1/4$. Then, if $4\epsilon_1$ is chosen to be $(\diam(\X)(\gamma+1)/\gamma)^{-1}$, the coefficient in front
 of $\|v\|_{L^2}^2$ on the right is less than $5d/8 - 11/16$. Then $\|v\|_{L^2}$ term on the left dominates and we have
\begin{equation*}
(\frac{11}{16} - \frac{d}{8})\|v\|_{L^2}^2 \le  \left(
 \frac{(\gamma+1)^2}{2 \gamma^2} \diam^2(\X) + 2 \diam^2(\X) + \frac{1}{4}
(\frac{d}{2} - \frac{3}{4}) \right) \|\rho u\|_{L^2}^2.
\end{equation*}
Estimate \eqref{eq:vL2bdd} follows from this immediately.
\end{proof}

\begin{lemma}\label{lem:DFinv}
Let $\eta$ denote the constant \eqref{eq:vL2bdd0}.
Suppose that the absorption coefficient $q$ is such that
\begin{equation}
\eta \|q\|_{L^\infty (\X)} < \frac{1}{4}. \label{4.23}
\end{equation}
Suppose also that $|u[q]|$ is bounded from below by a positive
number. Then the map $\D F[q]$, as an operator from $L^2(\X)$ to
$L^2(\X)$, is invertible. Moreover,
\begin{equation}
    \|D F[q]^{-1}\|_{\mathcal{L}(L^2(\X))} \leq \frac{1}{\inf |u[q]|^2\sqrt{1 - 4\eta \|q\|_{L^{\infty}(\X)}}}.
\label{ine:DFinve}
\end{equation}
\end{lemma}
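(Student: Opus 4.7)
The plan is to split $\D F[q]$ into an invertible multiplication part and a small perturbation governed by the linearized equation \eqref{eq:veq}, extract coercivity by expanding the squared norm in a weight tailored to $|u[q]|^{-2}$, and then promote injectivity to full invertibility via a Fredholm argument. Concretely, formula \eqref{4.5} furnishes the decomposition $\D F[q]\rho = A\rho + B\rho$, where $A\rho = |u[q]|^2 \rho$ and $B\rho = 2q\,\Re\bigl(u[q]\,\overline{v(\rho)}\bigr)$. Since $|u[q]|^2$ is bounded above by $\|u[q]\|_{L^\infty}^2$ and below, by hypothesis, by $\inf|u[q]|^2 > 0$, the multiplication operator $A$ is an isomorphism of $L^2(\X)$.

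The technical heart of the proof, and the step that demands the most care because the stated constant $\sqrt{1-4\eta\|q\|_{L^\infty}}$ is sharp, is the coercivity estimate. I would carry it out in the weighted norm $\|f\|_w^2 := \int_\X |f|^2 \,|u[q]|^{-2}\,dx$, so that $A$ acts almost like an isometry: $\|A\rho\|_w^2 = \int_\X \rho^2 |u[q]|^2\,dx = \|\rho\,u[q]\|_{L^2}^2$. For $B$, the pointwise bound $|B\rho| \le 2\|q\|_{L^\infty}\,|u[q]|\,|v(\rho)|$ combined with Lemma~\ref{lem:vL2bdd} yields $\|B\rho\|_w^2 \le 4\|q\|_{L^\infty}^2 \|v(\rho)\|_{L^2}^2 \le 4\eta^2 \|q\|_{L^\infty}^2\,\|A\rho\|_w^2$. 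Expanding $\|\D F[q]\rho\|_w^2 = \|A\rho\|_w^2 + 2\Re\langle A\rho, B\rho\rangle_w + \|B\rho\|_w^2$, bounding the inner product by Cauchy--Schwarz, and discarding $\|B\rho\|_w^2 \ge 0$, gives $\|\D F[q]\rho\|_w^2 \ge (1 - 4\eta\|q\|_{L^\infty})\,\|A\rho\|_w^2$, which is positive by \eqref{4.23}. Converting back via $\|f\|_{L^2}\ge(\inf|u[q]|)\|f\|_w$ and $\|\rho u[q]\|_{L^2}\ge(\inf|u[q]|)\|\rho\|_{L^2}$ produces the coercivity inequality $\|\D F[q]\rho\|_{L^2} \ge \inf|u[q]|^2\,\sqrt{1-4\eta\|q\|_{L^\infty}}\,\|\rho\|_{L^2}$, which is exactly the reciprocal of the bound \eqref{ine:DFinve} and in particular shows that $\D F[q]$ is injective with closed range.

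Surjectivity, which I view as routine once the factorization is in place, I would obtain by Fredholm's alternative applied to $I + A^{-1}B$. The solution map $\rho\mapsto v(\rho)$ is bounded from $L^2(\X)$ into $H^1(\X)$ by \eqref{2.1010}; post-composition with multiplication by the $L^\infty$ functions $u[q]$, $q$, and $|u[q]|^{-2}$ preserves $L^2$-boundedness, and the embedding $H^1(\X) \hookrightarrow L^2(\X)$ on the bounded smooth domain $\X$ is compact. Hence $A^{-1}B$ is a compact operator on $L^2(\X)$, and Fredholm's alternative combined with the injectivity just established upgrades $\D F[q] = A(I + A^{-1}B)$ to an isomorphism, completing the proof of \eqref{ine:DFinve}.
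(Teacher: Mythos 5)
Your proposal is correct and follows essentially the same route as the paper: the paper likewise writes $\D F[q]\rho$ as $|u[q]|^2\rho$ plus the term $2q\,\Re\bigl(u[q]\overline{v(\rho)}\bigr)$, proves compactness of $T[q]\rho=|u[q]|^{-2}\D F[q]\rho-\rho$ (your $A^{-1}B$) through the same $L^2\to H^1\hookrightarrow L^2$ factorization, invokes Fredholm theory together with injectivity, and obtains injectivity from the same expansion of the squared norm combined with Lemma~\ref{lem:vL2bdd}, yielding the identical constant $\inf|u[q]|^2\sqrt{1-4\eta\|q\|_{L^\infty(\X)}}$. Your weighted norm $\|\cdot\|_w$ with weight $|u[q]|^{-2}$ is only a cosmetic repackaging of the paper's pointwise factoring out of $\inf|u[q]|^2$.
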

\begin{proof}
Define
\[
    T[q](\rho) = |u[q]|^{-2}\D F[q](\rho) - \rho . 
\]
It is not hard to see that $T$ is compact since it can be decomposed as
\[
\hspace*{-.24in}\begin{array}{rclclclccccccccc}
         T: L^2(\X) &\rightarrow& H^1(X) &\hookrightarrow& L^2(\X) &\rightarrow& L^2(\X)\\
        \rho &\mapsto& v(\rho) &\mapsto& v(\rho) &\mapsto& 2q|u[q]|^{-2}\Re (u[q] \overline
        v(\rho)).
    \end{array}\\
\]
The continuity of maps in the diagram above can be deduced from Proposition \ref{proposition 2.6} and the choice of $g$ such that $|u[q]| > 0$ in $\overline \X$.

On the other hand, a straightforward calculation shows that
\begin{equation}
    \|\D F[q](\rho) \|^2_{L^2(\X)} \geq \inf|u[q]|^4\|\rho \|_{L^2(\X)}^2\left[1 -
    4\eta\|q\|_{L^{\infty}(\X)}\right].
\label{4.24}
\end{equation}
In fact,
\begin{eqnarray*}
\|\D F[q](\rho) \|^2_{L^2(\X)} &=& \io \left[\rho^2|u[q]|^4 + 4q^2\Re^2 (u[q]\overline v(\rho)) + 4q\rho|u[q]|^2\Re (u[q]\overline v(\rho))\right]dx\\
&\geq& \inf|u[q]|^2 \io \left[\rho^2|u[q]|^2 + \frac{4q^2\Re^2 (u[q]\overline v(\rho))}{|u[q]|^2} + 4q\Re (\rho u[q]\overline v(\rho))\right]dx\\
&\geq& \inf|u[q]|^2 \io \left[\rho^2|u[q]|^2 - 4\|q\|_{L^{\infty}(\X)}|\rho u[q]\overline v(\rho)|\right]dx\\
&\geq& \inf|u[q]|^2\left[\|\rho |u[q]|\|_{L^2(\X)}^2 - 4\|q\|_{L^{\infty}(\X)}\|\rho u[q]\|_{L^2(\X)}\|v(\rho)\|_{L^2(\X)}\right]\\
&\geq& \inf|u[q]|^2\|\rho u[q]\|_{L^2(\X)}^2\left[1 -
4\eta\|q\|_{L^{\infty}(\X)}\right].
\end{eqnarray*}
Since $\eta \|q\|_{L^{\infty}(\X)} < 1/4$, we find (\ref{4.24}).
It follows that the kernel of $\D F[q]$ is $\{0\}$. Hence, by the
Fredholm theory, $\D F[q]$ is invertible. Moreover, (\ref{4.24})
also implies (\ref{ine:DFinve}).
\end{proof}
\begin{remark} Recall the definition of $\eta$ in (\ref{eq:vL2bdd0}). When $\X$ is a ball, $\eta$ is roughly three to four times the radius of $\X$ in dimensions three or two.
Condition \eqref{4.23} hence requires that $\|q\|_{L^\infty}
\diam(\X)$, which can be interpreted as the typical absorption
rate as signals propagate to the boundary, should be sufficiently
small.
\end{remark}

\section{Measurement noise and resolution enhancement}
\label{sec:noise}

In this section, we consider additive noise in the data set
$\mathcal{E}$ given in \eqref{eq:dataset}.

\subsection{Noise model}

As described in Proposition \ref{proposition 2.5}, the data
$\mathcal{E}$ are acquired by measuring several sets of absorbed
radiations: $q|u_1+u_j|^2$, $q|iu_1 + u_j|^2$, $q|u_1|^2$, and
$q|u_j|^2$ for $j = 2, \ldots, d+1$. In practice, the measurements
of these absorbed energies are corrupted by additive noises. We
model a typical energy measurement by
\begin{equation}
\Em(x) = E(x) + \sigma W_\delta(x)  . \label{eq:Emnoise}
\end{equation}
Here and in the sequel, the superscript ``m" indicates measured
quantity, and $E$ itself is the pure quantity without noise.
$W_\delta$ is a stationary random field with mean zero and covariance function of the form
\begin{equation}
\E  \big[W_\delta(x) W_\delta(y) \big] = \E \big[ W_\delta(0)W_\delta(x-y) \big]= R\Big(\frac{x-y}{\delta}\Big),
 \label{eq:Rdef}
\end{equation}
where $R$ is an integrable function normalized so that $R(0)=1$.
In this additive noise model,
$\sigma^2$ is the variance of the noise, $\delta$ is the correlation length which is related to the distance between measurement points.

The random process $W_\delta$ is assumed to be bounded almost surely
by a constant 
 independent of $\delta$.
This constant is assumed to be smaller than $E_{\rm min}$ which is a lower bound for the real energy.
This technical hypothesis ensures that $\Em$ is bounded from below by a positive constant
for any $\sigma \leq 1$ and for any $\delta$.

In the forthcoming analysis, both the noise variance $\sigma$ and the noise correlation length $\delta$ will be supposed to be small.
We assume that the measured data
$\mathcal{E}^\mathrm{m} = (\Em_j )_{j=1}^{d+1}$ are given by
\begin{equation}
\begin{aligned}
\Em_1(x ) &= E_1(x) + \sigma W_{\delta 1}(x ), \\
\Em_j(x ) &= E_j(x) + \sigma U_{\delta j}(x) +
i\sigma V_{\delta j}(x), \quad \quad j = 2, \ldots, d+1.
\end{aligned}
\label{eq:Enoise}
\end{equation}
According to the procedure of measuring $E_j$, the random fields $U_{\delta j}$ and $V_{\delta j} $  are given by
$(W_{\delta 1j} - W_{\delta 1} - W_{\delta j})/2$ and $(W_{\delta 1j'} - W_{\delta 1} - W_{\delta j})/2$
respectively, where $W_{\delta j}$, $W_{\delta 1j}$ and $W_{\delta 1j'}$ correspond to the
additive noises of the energy measurements $q|u_j|^2$, $q|u_1 +
u_j|^2$ and $q|iu_1 + u_j|^2$, respectively. It is natural to
assume that $W_{\delta 1}$, $W_{\delta j}$, $W_{\delta 1j}$ and $W_{\delta 1j'}$ are mutually
independent and have the same statistical distribution as $W_\delta$ in
\eqref{eq:Emnoise}. As a consequence, $U_{\delta j}$, $V_{\delta j}$ and $W_{\delta 1}$ are
correlated.

\subsection{Initial guess with smoothed data}
\label{sec:igsd}

We smooth the data $\mathcal{E}$ by using the convolution kernel
\begin{equation}
\varphi_\delta (x) : = \frac{1}{\delta^{dp}}
\varphi(\frac{x}{\delta^p}), \label{eq:vphidelta}
\end{equation}
where $p \in (0,\frac{d}{d+6})$ and $\varphi$ is in the Schwartz space of smooth nonnegative functions that
decay rapidly at infinity and that satisfy $\int_{\R^d} \varphi(x) dx=1$.
 The condition $p < d/(d+6)$ will be clear later.  The
following lemma will be useful.

\begin{lemma}
\label{lem:Wcphi}%
Let $|\gamma|$ denote the sum of all components of the multi-index
$\gamma$ and $\partial^\gamma \varphi$ (resp. $\partial^\gamma \varphi_{\delta}$)
denotes the usual $\gamma-$partial derivative of $\varphi$ (resp. $\varphi_{\delta}$).
For any $\delta$ we have
\begin{equation}
\E \lvert W_{1\delta} * \partial^\gamma \varphi_\delta \rvert^2
\leq   \delta^{d-(d+2|\gamma|)p} \| R \|_{L^1(\R^d)}
\| \partial^{\gamma} \varphi \|_{L^2(\R^d)}^2.
\label{eq:Wcphi0}
\end{equation}
More precisely, for $\delta \ll 1$, we have
\begin{equation}
\E \lvert W_{1\delta} * \partial^\gamma \varphi_\delta \rvert^2
= \delta^{d-(d+2|\gamma|)p} \int_{\R^d} R(y) dy
\int_{\R^d} |\partial^{\gamma} \varphi (y')|^2 dy'  +
o\big(  \delta^{d-(d+2|\gamma|)p} \big).
\label{eq:Wcphi}
\end{equation}
\end{lemma}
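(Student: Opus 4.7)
The plan is to expand the variance by Fubini's theorem, use the stationarity to pull out the covariance kernel, and then extract the scaling in $\delta$ and $\delta^p$ explicitly. Concretely, write
\begin{equation*}
(W_{1\delta} * \partial^\gamma \varphi_\delta)(x) = \int_{\R^d} W_{1\delta}(x-y) \, \partial^\gamma\varphi_\delta(y) \, dy,
\end{equation*}
so that Fubini together with \eqref{eq:Rdef} yields
\begin{equation*}
\E \lvert W_{1\delta} * \partial^\gamma \varphi_\delta \rvert^2
= \iint \partial^\gamma\varphi_\delta(y) \, \partial^\gamma \varphi_\delta(y') \, R\!\left(\frac{y-y'}{\delta}\right) dy\, dy'.
\end{equation*}
I would then change variables $y' = y + \delta z$ (so $dy' = \delta^d dz$) to get
\begin{equation*}
\E \lvert W_{1\delta} * \partial^\gamma \varphi_\delta \rvert^2
= \delta^d \int_{\R^d} R(z) \left[ \int_{\R^d} \partial^\gamma \varphi_\delta(y) \, \partial^\gamma\varphi_\delta(y + \delta z) \, dy \right] dz.
\end{equation*}

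For the inequality \eqref{eq:Wcphi0}, I would apply Cauchy--Schwarz in the inner integral, which gives $|\int \partial^\gamma\varphi_\delta(y) \partial^\gamma\varphi_\delta(y+\delta z) dy| \le \|\partial^\gamma \varphi_\delta\|_{L^2}^2$, together with the scaling identity
\begin{equation*}
\|\partial^\gamma \varphi_\delta\|_{L^2(\R^d)}^2 = \delta^{-(d+2|\gamma|)p} \|\partial^\gamma \varphi\|_{L^2(\R^d)}^2,
\end{equation*}
which follows from $\partial^\gamma \varphi_\delta(x) = \delta^{-(d+|\gamma|)p} (\partial^\gamma\varphi)(x/\delta^p)$ by the substitution $y = \delta^p y'$. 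Putting these into the above display and using $\int |R| \le \|R\|_{L^1}$ produces \eqref{eq:Wcphi0} immediately.

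For the sharp asymptotic \eqref{eq:Wcphi}, I would perform the same substitution $y = \delta^p y'$ inside the inner integral to get
\begin{equation*}
\int_{\R^d} \partial^\gamma\varphi_\delta(y) \partial^\gamma\varphi_\delta(y+\delta z)\, dy = \delta^{-(d+2|\gamma|)p} \int_{\R^d}(\partial^\gamma \varphi)(y') (\partial^\gamma \varphi)(y' + \delta^{1-p} z) \, dy',
\end{equation*}
so that the full expression becomes
\begin{equation*}
\E \lvert W_{1\delta} * \partial^\gamma \varphi_\delta \rvert^2 = \delta^{d-(d+2|\gamma|)p} \int_{\R^d} R(z) \left[ \int_{\R^d} (\partial^\gamma\varphi)(y')(\partial^\gamma\varphi)(y' + \delta^{1-p} z)\, dy' \right] dz.
\end{equation*}
Since $p<1$, as $\delta \to 0$ the shift $\delta^{1-p} z$ tends to zero, so by continuity of translation in $L^2$ the inner integral converges pointwise in $z$ to $\|\partial^\gamma \varphi\|_{L^2}^2$, while remaining dominated by this same quantity in absolute value (by Cauchy--Schwarz). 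Since $R \in L^1(\R^d)$, dominated convergence gives the limit $\int R(z)dz \cdot \|\partial^\gamma \varphi\|_{L^2}^2$, which is precisely the leading term claimed in \eqref{eq:Wcphi}.

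The proof is essentially a bookkeeping exercise with two distinct length scales, the correlation length $\delta$ of the noise and the smoothing length $\delta^p$. The only mildly delicate point I foresee is justifying the pointwise-in-$z$ convergence of the inner integral and the dominating function; once Cauchy--Schwarz is in hand this is routine, so I would not expect any real obstacle.
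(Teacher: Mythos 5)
Your proof is correct and follows essentially the same route as the paper: expand the variance via the covariance kernel $R((y-y')/\delta)$, rescale with the two length scales $\delta$ and $\delta^p$ to extract the factor $\delta^{d-(d+2|\gamma|)p}$, then use Cauchy--Schwarz for the bound and dominated convergence (with $p<1$ and continuity of translation in $L^2$) for the sharp asymptotic. The only difference is cosmetic: you perform the two changes of variables sequentially rather than simultaneously, as the paper does.
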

\begin{proof} The variance (\ref{eq:Wcphi0}) can be written as
\begin{equation*}
\begin{aligned}
\E \lvert W_{1\delta} * \partial^\gamma \varphi_\delta \rvert^2=
& \E \frac{1}{\delta^{2p|\gamma|+2dp}} \int_{\R^d} \int_{\R^d} W_{1\delta}(x - y) W_{1\delta}(x - y') (\partial^\gamma \varphi)(\frac{y}{\delta^p}) (\partial^\gamma \varphi)(\frac{y'}{\delta^p}) dy dy'\\
= & \frac{1}{\delta^{2p|\gamma|+2dp}} \int_{\R^d}
\int_{\R^d} R(\frac{y-y'}{\delta}) (\partial^\gamma
\varphi)(\frac{y}{\delta^p}) (\partial^\gamma
\varphi)(\frac{y'}{\delta^p}) dy dy'.
\end{aligned}
\end{equation*}
We apply the change of variable $(y-y')/\delta \to y'$ and  $y/\delta^p \to
y$, and take advantage of the resulting Jacobian. We verify that
the variance can be written as
\begin{equation*}
\E \lvert W_{1\delta} * \partial^\gamma \varphi_\delta \rvert^2=
  \delta^{d + dp -2p|\gamma| - 2dp} \int_{\R^d} R(y') \int_{\R^d}
(\partial^\gamma \varphi)(y) (\partial^\gamma \varphi)(y -
\delta^{1-p} y') dy dy'.
\end{equation*}
Using Cauchy-Schwarz inequality and the fact that $\partial^\gamma \varphi \in L^2$ and $R \in L^1$,
we obtain \eqref{eq:Wcphi0}.
Since $\partial^\gamma \varphi \in L^2$, $p<1$, and $R$ is integrable,
 \eqref{eq:Wcphi} is also easily verified by the dominated convergence theorem.
\end{proof}
\begin{remark}
The above calculation works also for $U_{j\delta}$ and
$V_{j\delta}$.
\end{remark}

We smooth the data by evaluating the convolution with the kernel $\varphi_\delta$:
\begin{equation}
 \Es_j = \Em_j * \varphi_\delta , \quad j = 1, \ldots, d+1,
\label{eq:Esdef}
\end{equation}
which gives
\begin{eqnarray}
E_{1}^{\rm s} &=& E_1 * \varphi_{\delta} + \sigma W_{1\delta} * \varphi_{\delta}  ,\\
\Es_j &=& E_j * \varphi_\delta + \sigma
U_{j\delta} * \varphi_\delta + i \sigma V_{j\delta} *
\varphi_\delta, \quad j = 2, \ldots, d+1.
\end{eqnarray}
Here and below, the superscript ``s" indicates smoothed
quantities. The parameter $\delta^p$ can be interpreted as the
size of the averaging window. To simplify the notation,
$E_{j\delta}$ will be used as the short-hand notation for the
smoothed unperturbed data $E_j * \varphi_\delta$ in the sequel.

\begin{proposition}
If we substitute the smoothed measured data $(\Es_j)_{j=1}^{d+1}$
into the reconstruction formula \eqref{2.12}:
\begin{equation}
\label{2.12s}
 q^{\rm s}(x)  =   \frac{-\Re ( a^{\rm s} ) \Im (a^{\rm s} ) + \mbox{div} \,  \Im  (a^{\rm s}) }{2k}  ,
\end{equation}
with   $a^{\rm s}= {(A^{\rm s})}^{-1}[ (\nabla^T {(A^{\rm
s})}^T)^T ]$, $A^{\rm s}= (\partial_l \alpha^{\rm s}_{j+1}
)_{j,l=1,\ldots,d}$, and $\alpha^{\rm s}_j=E_j^{\rm s}/E_1^{\rm
s}$,
 then the estimate $q^{\rm s}$ satisfies:
\begin{equation}
\sup_{x \in X} \E\big[ | q^{\rm s}(x)  - q_\delta(x)|^2 \big] \leq
C\sigma^2 \delta^{d-(d+6)p} ,
\end{equation}
where
$$
q_\delta(x)= \frac{-\Re ( a^\delta ) \Im (a^\delta ) +  \mbox{div} \, \Im (a^\delta) }{2k}
$$
is obtained by substituting the smoothed unperturbed data
$(E_{j\delta})_{j=1}^{d+1}$ into the reconstruction formula
\eqref{2.12}.
\end{proposition}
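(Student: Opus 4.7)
The plan is to treat $q^{\rm s}$ and $q_\delta$ as the values of the same nonlinear differential-algebraic operator $\Phi$ applied to the perturbed smoothed data $(\Es_j)$ and the unperturbed smoothed data $(E_{j\delta})$, and then to linearize in the small noise perturbation $\Es_j - E_{j\delta}$. Concretely, I would write
\[
\Es_j - E_{j\delta} = \sigma N_{j\delta}, \qquad N_{1\delta}= W_{1\delta}*\varphi_\delta, \qquad N_{j\delta}= U_{j\delta}*\varphi_\delta + iV_{j\delta}*\varphi_\delta \ (j\geq 2),
\]
and observe that $\Phi$ depends on the data through $\alpha^{\rm s}_j = \Es_j/\Es_1$ and their first, second, and third partial derivatives (the first two through $A^{\rm s}$ and $\nabla^T(A^{\rm s})^T$ defining $a^{\rm s}$, the third through the extra $\mbox{div}\,\Im(a^{\rm s})$ in \eqref{2.12s}).

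First I would establish that $\Phi$ is smooth in a neighborhood of $(E_{j\delta})$ as a function of the jets $(\partial^\gamma \Es_j)_{|\gamma|\le 3}$. The two points to check are: (i) $\Es_1$ is bounded away from $0$ uniformly in $\delta,\sigma$ (which follows from the hypothesis that $W_\delta$ is almost surely bounded by a constant smaller than $E_{\rm min}$, transferred to the convolution by $\varphi_\delta$); and (ii) $A^{\rm s}$ is invertible with $\|(A^{\rm s})^{-1}\|$ under control. The latter follows because $A^\delta$ is invertible by Definition \ref{def: proper set of measurement} and the linear-independence computation preceding \eqref{2.888}, and $A^{\rm s}- A^\delta$ is $\sigma$ times derivatives of smoothed noise, which, by Lemma \ref{lem:Wcphi} with $|\gamma|=1$, is of variance $\sigma^2 \delta^{d-(d+2)p}\to 0$.

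Second, I would perform a first-order Taylor expansion of $\Phi$ about $(E_{j\delta})$:
\[
q^{\rm s}(x) - q_\delta(x) = \sum_{j=1}^{d+1}\sum_{|\gamma|\le 3} L_{j,\gamma}(x)\,\partial^\gamma(\sigma N_{j\delta})(x) + \mathcal{R}(x),
\]
where the coefficients $L_{j,\gamma}$ are smooth, deterministic, and bounded on $\overline{\X}$ (they are built from $E_{j\delta}$ and its derivatives, together with $(A^\delta)^{-1}$). Taking expectations of the squared equality and using Cauchy--Schwarz together with Lemma \ref{lem:Wcphi}, every term in the linear part contributes at most $C\sigma^2\delta^{d-(d+2|\gamma|)p}$, the worst case being $|\gamma|=3$, giving the claimed $C\sigma^2\delta^{d-(d+6)p}$.

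The main obstacle is controlling the nonlinear remainder $\mathcal{R}$. For this I would use the integral form of the Taylor remainder together with uniform $L^\infty$ bounds on derivatives of the smoothed noise. The crude bound $|\partial^\gamma N_{j\delta}(x)|\le \|W\|_\infty \|\partial^\gamma\varphi_\delta\|_{L^1}\lesssim \delta^{-|\gamma|p}$ (from the a.s. boundedness of $W_\delta$ and the scaling of $\varphi_\delta$) suffices to show $\mathcal{R}=O(\sigma^2 \delta^{-6p})$ pointwise almost surely; the condition $p<d/(d+6)$ ensures this quadratic error is absorbed into $C\sigma^2\delta^{d-(d+6)p}$ (indeed $\delta^{-6p}\ll \delta^{d-(d+6)p}$ fails in general, so I would instead split $\mathcal R$ and estimate the dominant bilinear piece $\sigma^2 \partial^\gamma N \cdot \partial^{\gamma'} N$ in expectation by the Cauchy--Schwarz form of Lemma \ref{lem:Wcphi}, keeping $|\gamma|+|\gamma'|\le 6$ and bounding the strict remainder deterministically). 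Taking the supremum over $x\in\X$ at the very end, using boundedness of the deterministic coefficients on $\overline{\X}$, yields the stated estimate.
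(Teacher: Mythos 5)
Your proposal is correct and follows essentially the same route as the paper: linearize the reconstruction formula in $\sigma$ around the smoothed unperturbed data $(E_{j\delta})$, identify the dominant error terms as noise convolved with up to third derivatives of $\varphi_\delta$ (with coefficients controlled by the lower bound on $E_{1\delta}$ and the invertibility of $A^\delta$), and invoke Lemma \ref{lem:Wcphi} with $|\gamma|\le 3$ to get the variance bound $C\sigma^2\delta^{d-(d+6)p}$. If anything, your treatment of the nonlinear remainder is more explicit than the paper's, which simply absorbs it into $O(\sigma^2)$ and $o(\sigma\delta^{d/2-(d+6)p/2})$ terms under the standing assumption $\sigma\ll 1$.
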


\begin{proof}
We substitute the smoothed data $(\Es_j)_{j=1}^{d+1}$ into the
reconstruction formula \eqref{2.12}. Recall the definitions of $A$
and $\alpha_j$ in \eqref{2.888} and \eqref{2.2}. Then,
\begin{equation}
\label{eq:alphas}
\alpha^\mathrm{s}_j = \frac{E_{j\delta} + \sigma
U_{j\delta}*\varphi_\delta + i \sigma V_{j\delta}*
\varphi_\delta}{E_{1\delta} + \sigma W_{1\delta}*\varphi_\delta}.
\end{equation}
When $\sigma \ll 1$, we can linearize this term and find that
\begin{equation}
\alpha^\mathrm{s}_j = \frac{E_{j\delta}}{E_{1\delta}} - \sigma
\frac{W_{1\delta}*\varphi_\delta}{E_{1\delta}}
\frac{E_{j\delta}}{E_{1\delta}} + \sigma
\frac{U_{j\delta}*\varphi_\delta}{E_{1\delta}} + i\sigma
\frac{V_{j\delta}*\varphi_\delta}{E_{1\delta}} + O(\sigma^2).
\label{eq:alphaexp}
\end{equation}
The coefficients of the matrix $\As$ are defined by $\As_{jl}  =  \partial_l \alphas_{j+1}$
and they can be expanded from (\ref{eq:alphas}) as
\begin{equation}
\As_{jl}  =   A^\delta_{jl} + \sigma A^{\delta(1)}_{jl} + o(\sigma
\delta^{d/2-(d+2)p/2}), \h   1 \leq j, l \leq d ,
\label{eq:Aexp}
\end{equation}
where
$$
A^{\delta}_{jl}  =
 \partial_l \frac{E_{j+1\delta}}{E_{1\delta}} ,\quad \quad
 A^{\delta(1)}_{jl}  =  -   \frac{W_{1\delta}* \partial_l \varphi_\delta}{E_{1\delta}} \frac{E_{j+1\delta}}{E_{1\delta}} +   \frac{U_{j+1\delta}* \partial_l \varphi_\delta}{E_{1\delta}} + i \frac{V_{j+1\delta}* \partial_l \varphi_\delta}{E_{1\delta}}
 .
$$
The leading-order error terms $ \sigma A^{\delta(1)}_{jl} $ have zero means and
their variances are of order $O(\sigma^2 \delta^{d - (d+2)p})$
according to Lemma \ref{lem:Wcphi}, provided that the functions
$E_j$'s are sufficiently smooth with bounded derivatives.
The following error terms like $  W_{1\delta}*  \varphi_\delta \partial_l  \big( \frac{E_{j+1\delta}}{E_{1\delta}^2}\big) $
are smaller since their square means are of order $O(\sigma^2 \delta^{d - dp})$.

Since $A^\delta$ is a smoothed version of $A$, which was defined in
\eqref{2.888} and whose determinant can be bounded from below by a
large constant (see Proposition \ref{proposition 2.3}), the
inverse of $A^\delta$ is well defined. Linearizing
$(A^\mathrm{s})^{-1}$, we have
\begin{equation*}
(A^\mathrm{s})^{-1} = (A^\delta)^{-1} + \sigma (A^\delta)^{-1}
A^{\delta(1)} (A^\delta)^{-1} + o(\sigma \delta^{d/2 -(d+2)p/2}).
\end{equation*}
Similarly, the vector $ (\nabla^T {A^\mathrm{s}}^T)^T$ can be decomposed as
\begin{equation*}
\begin{aligned}
(\nabla^T {A^\mathrm{s}}^T)_j = & (\nabla^T {A}^T)_j +
\sigma \left( - \frac{W_{1\delta}* \Delta \varphi_\delta}{E_{1\delta}} \frac{E_{j+1\delta}}{E_{1\delta}} +  \frac{U_{j+1\delta}*\Delta \varphi_\delta}{E_{1\delta}} + i \frac{V_{j+1\delta}*\Delta \varphi_\delta}{E_{1\delta}}\right) \\
& \quad + o(\sigma \delta^{d/2-(d+4)p/2}).
\end{aligned}
\end{equation*}
Finally, we have for the vector $a^{\rm s} = {(A^\mathrm{s})}^{-1}
(\nabla^T {A^\mathrm{s}}^T)^T$:
\begin{equation*}
\begin{aligned}
a^{\rm s}_j = &  a^{\delta} _j +  \sigma  \sum_{l=1}^d (A^\delta)^{-1}_{jl} \left(
- \frac{W_{1\delta}* \Delta \varphi_\delta}{E_{1\delta}} \frac{E_{l+1\delta}}{E_{1\delta}} + \frac{U_{l+1\delta}*\Delta \varphi_\delta}{E_{1\delta}} +
  i \frac{V_{l+1\delta}*\Delta
\varphi_\delta}{E_{1\delta}}\right) \\
&+ o(\sigma
\delta^{d/2-(d+4)p/2}),
\end{aligned}
\end{equation*}
and
\begin{equation*}
\begin{aligned}
\mbox{div} \, a^{\rm s} =
  &  \mbox{div} \, a^\delta +  \sigma \sum_{j,l=1}^d (A^\delta)^{-1}_{jl} \left(
  - \frac{W_{1\delta}* (\partial_j \Delta \varphi)_\delta}{E_{1\delta}} \frac{E_{l+1\delta}}{E_{1\delta}} + \frac{U_{l+1\delta}*(\partial_j\Delta \varphi)_\delta}{E_{1\delta}} + i \frac{V_{l+1\delta}*(\partial_j\Delta
\varphi)_\delta}{E_{1\delta}}\right) \\
&+ o(\sigma
\delta^{d/2-(d+6)p/2}).
\end{aligned}
\end{equation*}
The vector  $a^{\delta} = {(A^\delta)}^{-1} (\nabla^T
{A^\delta}^T)^T$ is obtained by applying formulas (\ref{2.2}) and
(\ref{2.888}) to the smoothed unperturbed data
$(E_{j\delta})_{j=1}^{d+1}$. The leading-order error terms have
zero means and their variances are of order $ O(\sigma^2 \delta^{d
- (d+6)p})$ according to Lemma \ref{lem:Wcphi}. Our choice $p <
\frac{d}{d+6}$ guarantees that the noisy data are smoothed enough
so that the terms above have variance of order smaller than
$\sigma^2$. To summarize, if we apply (\ref{2.12}) to the smoothed
data $(\Es_j)_{j=1}^{d+1}$, then we get
\begin{equation}
\label{eq:iguess}
\begin{aligned}
 q^{\rm s}(x)  = & q_\delta(x)
 - \frac{\sigma}{2k} \bigg\{ \sum_{j,l=1}^d  \Im (A^\delta)^{-1}_{jl} \left( - \frac{W_{1\delta}*  \partial_j \Delta \varphi_\delta}{E_{1\delta}} \frac{E_{l+1\delta}}{E_{1\delta}} +\frac{U_{l+1\delta}* \partial_j\Delta \varphi_\delta}{E_{1\delta}} \right)  \\
& +  \Re (A^\delta)^{-1}_{jl}
\frac{V_{l+1\delta}* \partial_j\Delta \varphi_\delta}{E_{1\delta}}
\bigg\} +  o(\sigma \delta^{d/2-(d+6)p/2}),
\end{aligned}
\end{equation}
from which we deduce the desired result.
\end{proof}

The terms $q_\delta$ can be shown to be close to the real absorption parameter
$q_o$ uniformly in $x$ (we show this in Theorem \ref{thm:iguessbdd}). However, it is impossible to separate
$q_\delta$ from the noise, that is the other terms in (\ref{eq:iguess}).
Nevertheless, the estimate $q^{\rm s}$ is a good initial guess in the mean square sense
as shown by the following theorem.

\begin{theorem}
\label{thm:iguessbdd}%
Suppose that the pure data $(E_j)_{j=1}^{d+1}$ belong to $\mathcal{C}^{3,\eps}$ for
some positive real number $\eps$. Then, we have
\begin{equation}
\label{eq:lemigu1} \|q_\delta - q_o\|_{L^\infty(\X)} \le C
\delta^{\eps p}.
\end{equation}
As a result, estimate (\ref{2.12s})  obtained from the smoothed
data satisfies
\begin{equation} \label{eq:lemigu2}
\sup_{x \in X} \E \big[ |q^{\rm s}(x)- q_o(x) |^2  \big] \le C \big( \delta^{2 \eps p} + \sigma^2
\delta^{d-(d+6)p} \big).
\end{equation}
\end{theorem}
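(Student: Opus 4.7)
The plan is to split $q^{\mathrm{s}}(x) - q_o(x)$ via the triangle inequality into the deterministic smoothing error $q_\delta(x) - q_o(x)$ and the stochastic error $q^{\mathrm{s}}(x) - q_\delta(x)$. The stochastic part is already controlled by the preceding proposition, which provides $\E\lvert q^{\mathrm{s}}(x) - q_\delta(x)\rvert^2 \le C\sigma^2\delta^{d-(d+6)p}$, so the substantive task is to prove the deterministic estimate (\ref{eq:lemigu1}) and then combine the two pieces.

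To establish (\ref{eq:lemigu1}), I view the reconstruction formula (\ref{2.12}) as a smooth nonlinear functional of the data together with their partial derivatives up to third order: $\alpha_j = E_j/E_1$ uses the data themselves, $A=(\partial_l\alpha_{j+1})$ uses first derivatives, $a = A^{-1}[(\nabla^T A^T)^T]$ uses second derivatives, and $\mbox{div}\,\Im(a)$ in (\ref{2.12}) brings in third derivatives of the $E_j$. Since $E_j \in \mathcal{C}^{3,\eps}$, a standard mollifier estimate gives
\[
\lVert \partial^\gamma E_{j\delta} - \partial^\gamma E_j \rVert_{L^\infty(\X)} \;\le\; C\,\delta^{\eps p}, \qquad \lvert\gamma\rvert \le 3,
\]
because convolving a $\mathcal{C}^\eps$ function with a normalized kernel of bandwidth $\delta^p$ produces an $L^\infty$ error of order $(\delta^p)^{\eps}$. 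For $\delta$ small, continuity together with the uniform lower bounds on $\lvert E_1 \rvert$ and on $\lvert \det A \rvert$ (established in Definition \ref{def: proper set of measurement} and in the argument following (\ref{2.888})) imply that $E_{1\delta}$ and $\det A^\delta$ are also uniformly bounded away from zero. The functional that assembles (\ref{2.12}) from the data and their first three derivatives is therefore Lipschitz on a neighborhood containing both $(E_j)$ and $(E_{j\delta})$. Combining this with the mollifier estimate, and noting that plugging the unperturbed data into (\ref{2.12}) returns $q_o$ by Theorem \ref{Theorem 2.3}, yields (\ref{eq:lemigu1}).

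For (\ref{eq:lemigu2}) I apply $\lvert a+b\rvert^2 \le 2\lvert a\rvert^2 + 2\lvert b\rvert^2$ to the decomposition
\[
q^{\mathrm{s}}(x) - q_o(x) = \bigl(q^{\mathrm{s}}(x) - q_\delta(x)\bigr) + \bigl(q_\delta(x) - q_o(x)\bigr),
\]
take expectation, bound the first term by $C\sigma^2\delta^{d-(d+6)p}$ using the preceding proposition and the second (deterministic) term by $C\delta^{2\eps p}$ via (\ref{eq:lemigu1}), and finally take the supremum over $x \in \X$.

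I expect the main obstacle to be the uniform Lipschitz continuity of the reconstruction map in the $\mathcal{C}^3$-norm of the data: one must verify that $A^\delta$ stays invertible and $E_{1\delta}$ stays bounded below uniformly along the whole family interpolating between $(E_j)$ and $(E_{j\delta})$, which is exactly where the quantitative bounds of Section \ref{sect2} are needed. Once that stability statement is secured, the remainder reduces to standard mollifier estimates and the triangle inequality.
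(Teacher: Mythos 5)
Your proposal follows essentially the same route as the paper: the $\mathcal{C}^{3,\eps}$ hypothesis gives the mollifier estimate $\|\partial^\gamma E_{j\delta}-\partial^\gamma E_j\|_{L^\infty(\X)}\le C\delta^{\eps p}$ for $|\gamma|\le 3$, the estimate (\ref{eq:lemigu1}) then follows from the continuous (locally Lipschitz) dependence of formula (\ref{2.12}) on the data and their first three derivatives, and (\ref{eq:lemigu2}) is obtained by the triangle inequality combined with the stochastic bound of the preceding proposition. Your added remark that $E_{1\delta}$ and $\det A^\delta$ must stay uniformly bounded away from zero is exactly the point the paper handles implicitly (via Proposition \ref{proposition 2.3} and the discussion preceding the proposition on smoothed data), so the argument is sound and equivalent.
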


\begin{proof} Under the conditions of the theorem, the inequality $|\partial^\gamma E_j(x-y) - \partial^\gamma E_j(x)|
\le C|y|^{\eps}$ holds for some constant $C$ and for any
multi-index $\gamma$ with $|\gamma| \le 3$. As a result, we have
the following estimate as an analog of Lemma \ref{lem:Wcphi}:
\begin{equation}
\begin{aligned}
|\partial^\gamma E_{j\delta}(x) - \partial^\gamma E_j(x)| & =  \left|\frac{1}{\delta^{dp}} \int_{\R^d} (\partial^\gamma E_j(x-y) - \partial^\gamma E_j(x)) \varphi(\frac{y}{\delta^p}) dy\right| \\
&\le C \frac{1}{\delta^{dp}} \int_{\R^d} |y|^{\eps}
|\varphi(\frac{y}{\delta^p})| dy = C \delta^{\eps p} \int_{\R^d}
|y|^{\eps} |\varphi(y)| dy \le C\delta^{\eps p}.
\end{aligned}
\label{eq:Ecphi}
\end{equation}
Then the estimate of $q_\delta$ follows because the reconstruction
formula in \eqref{2.12} depends continuously on the data and their
derivatives. For the second estimate, we apply the triangle
inequality and use the control of the stochastic terms in the
linearization procedure.
\end{proof}

\begin{remark} Estimate (\ref{eq:lemigu2})  is a bit over pessimistic. Indeed, it does not imply that $q^{\rm s}$ is positive, which is a physical constraint for the absorption parameter. We will exploit this remark in the next section.
\end{remark}
\subsection{The optimization step and resolution enhancement}
\label{sec:osre}

Now we refine the above initial guess $q^{\rm s}$ by an optimal control
approach. We seek for the least square estimate of the discrepancy
functional
\begin{equation}
J[q] = \int_\X \big|F[q](x) - E^\mathrm{s}_1(x)\big|^2 dx. \label{eq:Jdef}
\end{equation}
Here, $E^\mathrm{s}_1$ is the smoothed data $(E_1 + \sigma
W_{1\delta}) * \varphi_\delta$ and $F[q] = q|u_1[q]|^2$ is the
absorbed heat energy with boundary condition $g_1$.

In Theorem \ref{thm:iguessbdd}, the initial guess $q^{\rm s}$ is
shown to be close to the true absorption coefficient. This allows
us to approximate the integrand in the definition of $J$ by its
linearization around $q^{\rm s}$; that is,
\begin{equation}
J[q] \approx \int_\X |\D F[q^{\rm s}] (q-q^{\rm s}) - b^{\rm s}|^2 dx,
\label{eq:Jlinear}
\end{equation}
where $b^{\rm s} = E^\mathrm{s}_1 - F[q^{\rm s}]$ is the residue. In the case
when $\D F[q^{\rm s}]$ is invertible from $L^2$ to $L^2$, the least
square solution of the approximate discrepancy functional is given
by
\begin{equation}
q_* = q^{\rm s} + (\D F[q^{\rm s}])^{-1} b^{\rm s}.
 \label{eq:qstardef}
\end{equation}
The following result shows that $q_*$ is a refinement of $q^{\rm
s}$ in the mean square sense (compared to Theorem
\ref{thm:iguessbdd}).

\begin{theorem}
\label{thm:refine}%
Recall that $q_o$ denotes the true absorption coefficient and
assume that the condition in Theorem \ref{thm:iguessbdd} holds. We
have
\begin{equation}
\E \big[ \|q_* - q_o\|_{L^2(\X)}^2 \big] = o(\delta^{2 \eps p} +
\sigma^2 \delta^{d-(d+6)p}) + O(\delta^{2p} + \sigma^2
\delta^{d(1-p)}). \label{eq:qstarbdd}
\end{equation}
\end{theorem}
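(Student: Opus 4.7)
\medskip

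\noindent\emph{Plan of proof.} The strategy is to linearize $F$ around the initial guess $q^{\rm s}$, so that the defining identity for $q_*$ becomes an algebraic identity for the error $q_* - q_o$, and then to control three resulting contributions: a quadratic Taylor remainder, a deterministic smoothing bias, and a smoothed noise term. Starting from \eqref{eq:qstardef} and writing $\Es_1 = F[q_o]*\varphi_\delta + \sigma\, W_{1\delta}*\varphi_\delta$, I would Taylor-expand
\[
F[q_o] - F[q^{\rm s}] = \D F[q^{\rm s}](q_o - q^{\rm s}) + R_2,
\]
with second-order remainder $R_2$. Substituting this into $q_* - q_o = (q^{\rm s} - q_o) + (\D F[q^{\rm s}])^{-1}(\Es_1 - F[q^{\rm s}])$ cancels the $q^{\rm s}-q_o$ contributions and yields the clean identity
\[
q_* - q_o = (\D F[q^{\rm s}])^{-1}\bigl[\,R_2 + (F[q_o]*\varphi_\delta - F[q_o]) + \sigma\,W_{1\delta}*\varphi_\delta\,\bigr].
\]

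Next I would invoke Lemma \ref{lem:DFinv} at $q^{\rm s}$ to bound the operator norm of $(\D F[q^{\rm s}])^{-1}$ on $L^2(\X)$ by a deterministic constant. Theorem \ref{thm:iguessbdd} places $q^{\rm s}$ close to $q_o$ in the mean-square sense, so in the regime $\sigma,\delta \ll 1$ both the smallness condition \eqref{4.23} and the positive lower bound on $|u_1[q^{\rm s}]|$ required in \eqref{ine:DFinve} are inherited from the corresponding properties of $q_o$. With this uniform $L^2$ boundedness of $(\D F[q^{\rm s}])^{-1}$ in hand, it suffices to estimate each of the three terms in $L^2(\X)$ and then take expectation. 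The deterministic smoothing error is treated exactly as in \eqref{eq:Ecphi} with $\gamma=0$ applied to $E_1 = F[q_o] \in \mathcal{C}^{3,\eps}$, giving $\|F[q_o]*\varphi_\delta - F[q_o]\|_{L^2(\X)}^2 = O(\delta^{2p})$, the first $O$-term in \eqref{eq:qstarbdd}. For the smoothed noise, Lemma \ref{lem:Wcphi} applied with $\gamma = 0$ yields $\E|W_{1\delta}*\varphi_\delta(x)|^2 \le C\delta^{d(1-p)}$ uniformly in $x\in\X$, so that $\E\|\sigma\,W_{1\delta}*\varphi_\delta\|_{L^2(\X)}^2 = O(\sigma^2\,\delta^{d(1-p)})$, producing the second $O$-term.

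The main obstacle is the quadratic remainder $R_2$. Writing
\[
R_2 = \int_0^1 (1-t)\,\D^2 F[q^{\rm s}+t(q_o-q^{\rm s})](q_o-q^{\rm s},q_o-q^{\rm s})\,dt,
\]
I would differentiate the formula \eqref{4.5} once more in $q$ and apply Proposition \ref{proposition 2.6} together with Lemma \ref{lem:vL2bdd} to the resulting auxiliary linear problems in order to exhibit $\D^2 F[p]$ as a bounded bilinear form on suitable function spaces. This produces a bound of the shape $\|R_2\|_{L^2(\X)}^2 \le C\,\|q^{\rm s}-q_o\|_{L^\infty(\X)}^2\,\|q^{\rm s}-q_o\|_{L^2(\X)}^2$; taking expectation and invoking Theorem \ref{thm:iguessbdd} then yields a bound of order $(\delta^{2\eps p}+\sigma^2\delta^{d-(d+6)p})^2 = o(\delta^{2\eps p}+\sigma^2\delta^{d-(d+6)p})$ as $\sigma,\delta\to 0$, which is precisely the $o$-term of \eqref{eq:qstarbdd}. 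The delicate point is that Theorem \ref{thm:iguessbdd} controls $\E|q^{\rm s}(x)-q_o(x)|^2$ only pointwise in $x$, not $\|q^{\rm s}-q_o\|_{L^\infty}$; one therefore either strengthens the initial-guess estimate to a uniform one (exploiting extra regularity of $\varphi$ via a Kolmogorov-type continuity argument) or carries out the remainder bound at the pointwise level and integrates only at the end. Combining the three contributions with the $L^2$ bound on $(\D F[q^{\rm s}])^{-1}$ then delivers \eqref{eq:qstarbdd}.
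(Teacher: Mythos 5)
Your proposal follows essentially the same route as the paper's proof: the identical decomposition $q_*-q_o=(\D F[q^{\rm s}])^{-1}\bigl\{\sigma W_{1\delta}*\varphi_\delta+(E_{1\delta}-E_1)+o(q_o-q^{\rm s})\bigr\}$, with the noise term controlled by Lemma \ref{lem:Wcphi}, the smoothing bias by the computation \eqref{eq:Ecphi}, the linearization error by Theorem \ref{thm:iguessbdd}, and the inverse bounded via Lemma \ref{lem:DFinv}. Your handling of the quadratic remainder $R_2$ (and your caveat that \eqref{eq:lemigu2} is pointwise in $x$ rather than uniform) is in fact more careful than the paper, which simply writes that term as $o(q_o-q^{\rm s})$ and cites \eqref{eq:lemigu2}.
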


\begin{proof} From the definition of $b^{\rm s}$ and $E^\mathrm{s}_1$, the residue can be expanded as
\begin{equation*}
b^{\rm s}= E_1 - F[q^{\rm s}] + (E_{1\delta} - E_1) + \sigma W_{1\delta} *
\varphi_\delta.
\end{equation*}
Since $E_1 = F[q_o]$, the difference $F[q_o] - F[q^{\rm s}]$ can be
linearized as $\D F[q^{\rm s}] (q_o - q^{\rm s}) + o(q_o - q^{\rm s})$. This,
together with \eqref{eq:qstardef}, implies
\begin{equation}
q_* - q_o = (\D F[q^{\rm s}])^{-1}  \{ \sigma W_{1\delta} *
\varphi_\delta + (E_{1\delta} - E_1) + o(q_o - q^{\rm s}) \}.
\label{eq:qerror}
\end{equation}
Lemma \ref{lem:Wcphi} shows that $\sigma W_{1\delta} *
\varphi_\delta$ has mean square of order $\sigma^2
\delta^{d(1-p)}$; the calculation in \eqref{eq:Ecphi} shows that
$E_{1\delta} - E_1$ can be bounded uniformly by $C\delta^p$; the
term $q_o - q^{\rm s}$ is also controlled in \eqref{eq:lemigu2}.
Consequently, since $\D F[q^{\rm s}]$ has bounded inverse (see Lemma
\ref{lem:DFinv}), the desired estimate holds.
\end{proof}

\begin{remark}
    Assume that $q_o$ is bounded from below and above by two known positive numbers $q_{\min}$ and $q_{\max}$. Let
    \[
         \hat q_{*} = \min\left\{\max\{q_*, q_{\min}\}, q_{\max}\right\} \in [q_{\min}, q_{\max}].
    \]
    We can  see that
    \[
        \|\hat q_* - q_o\|_{L^2(X)} \leq \|q_* - q_o\|_{L^2(\X)}.
    \] We note that there is no guarantee that $q_*$ is positive, but the modified version $\hat q_*$ is. In addition to this advantage, the estimate above shows that $\hat q_*$ is a better approximation of $q_o$ in comparison with $q_*$. Further, the range of $\hat q_*$ allows us to make iterations for further corrections.
\end{remark}

\begin{remark} Finally, we note that the above result also shows that the optimization step enhances the resolution.
In fact, from (\ref{eq:qstardef}) it follows that $q_*$ contains
higher oscillations than $q^{\rm s}$ and therefore, yields a more
resolved approximation of $q_o$.
\end{remark}

\section{Conclusion}

In this paper we have derived an exact reconstruction formula for
the absorption coefficient from thermo-acoustic data associated
with a proper set of measurements. Using a noise model for the
data, we have regularized this formula in order to obtain a good
initial guess. We have also performed a refinement of the initial
guess using an optimal control approach and shown that this
procedure reduces the occurring errors and yields a resolution
enhancement. A challenging problem is to estimate analytically the
resolution. It would be also very interesting to study the
reconstruction problem in the case of incomplete measurements,
where the thermal energy is known only on an open subset of the
domain. The numerical implementation of our approach in this paper
is the subject of forthcoming work, which will be published
elsewhere.


\end{document}